\newcommand{\rn}{\mathbb{R}^n}
\newcommand{\dint}{\displaystyle\int}
\newcommand{\R}{\mathbb{R}}
\newcommand{\N}{\mathbb{N}}
\DeclareMathOperator{\diver}{div}
\newtheorem{proposition}{Proposition}[section]
\newtheorem{theorem}{Theorem}[section]
\newtheorem{corollary}{Corollary}[section]
\newtheorem{lemma}{Lemma}[section]
\theoremstyle{definition}
\newtheorem{definition}{Definition}[section]
\newtheorem{remark}{Remark}[section]
\numberwithin{equation}{section}
\begin{document}
\title{Isoperimetric sets for weighted twisted eigenvalues}

\author{B. Brandolini
	\and A. Henrot
	 \and 
	A. Mercaldo
	\and 
	M.R. Posteraro
		}
		
\begin{abstract} 
In tis paper we prove an isoperimetric inequality for the first twisted eigenvalue $\lambda_{1,\gamma}^T(\Omega)$ of a weighted operator, defined as the minimum of the usual Rayleigh quotient when the trial functions belong to the weighted Sobolev space $H_0^1(\Omega,d\gamma)$ and have weighted mean value equal to zero in $\Omega$. We are interested in positive measures $d\gamma=\gamma(x) dx$ for which we are able to identify the isoperimetric
sets, namely, the sets that minimize $\lambda_{1,\gamma}^T(\Omega)$ among sets of given weighted measure. In the cases under consideration, the optimal sets are given by two identical and disjoint copies of the isoperimetric sets (for the weighted perimeter with respect to the weighted measure).
\end{abstract}

\maketitle


\section{Introduction}

Assume that $\Omega$ is a bounded, open subset of $\rn$. It is well-known that, among sets of given measure, the first eigenvalue $\lambda_1^D(\Omega)$ of the classical Dirichlet-Laplacian is minimized by balls. Namely, the celebrated Faber-Krahn inequality states that
$$
\lambda_1^D(\Omega)|\Omega|^{2/n} \ge \lambda_1^D(B)|B|^{2/n},
$$ 
equality holding if and only if $\Omega=B$ is a ball (up to sets of zero capacity). As everyone knows, the first eigenvalue $\lambda_1^D(\Omega)$ can be variationally characterized as follows
$$
\lambda_1^D(\Omega)=\min\left\{
\frac{\dint_\Omega |\nabla \phi|^2\, dx}{\dint_\Omega \phi^2\, dx}: \phi \in H_0^1(\Omega)\setminus\{0\}
\right\}.
$$
Hence a natural question arises: is it possible to derive similar lower bounds for the first eigenvalue of the Laplacian defined in subset of $H^1(\Omega)$ other that $H_0^1(\Omega)$? The answer is negative, if we consider the subset of $H^1(\Omega)$ filled by Sobolev functions having zero mean value. Indeed, it is well-known that the first non trivial eigenvalue $\lambda_1^N(\Omega)$ of the Neumann-Laplacian
(here $\Omega$ has to be smooth enough, for example Lipschitz), that is
$$
\lambda_1^N(\Omega)=\min\left\{
\frac{\dint_\Omega |\nabla \phi|^2\, dx}{\dint_\Omega \phi^2\, dx}: \phi \in H^1(\Omega)\setminus\{0\}, \int_\Omega \phi\, dx=0
\right\},
$$ 
cannot be estimated from below in terms of the measure of $\Omega$, and it is actually maximized by balls. More precisely, the famous Szeg\H{o}-Weinberger inequality (\cite{S,W}) holds true
$$
\lambda_1^N(\Omega)|\Omega|^{2/n}\le \lambda_1^N(B)|B|^{2/n},
$$
with equality sign if and only if $\Omega=B$ is a ball. We can hope to gain a universal lower bound depending on the measure by combining the zero mean value condition with another boundary condition. This is done in the definition of twisted eigenvalues of the Laplacian, which have been introduced in \cite{BB}, in the context of constant mean curvature surfaces.
In \cite{BB} the authors consider the following minimization problem
$$
\lambda_1^T(\Omega)=\min\left\{
\frac{\dint_\Omega |\nabla \phi|^2\, dx}{\dint_\Omega \phi^2\, dx}: \phi \in H_0^1(\Omega)\setminus\{0\}, \int_\Omega \phi\, dx=0
\right\}.
$$
or equivalently, the first eigenvalue of the following problem
\begin{equation}\label{babe}
\left\{
\begin{array}{ll}
-\Delta u =\lambda u - \dfrac{1}{|\Omega|}\dint_\Omega \Delta u\, dx \qquad &\mbox{in } \Omega
\\ 
u=0 & \mbox{on }\partial\Omega.
\end{array}
\right.
\end{equation}
Due to the presence of the mean value of the Laplacian, problems like \eqref{babe} are often referred to as nonlocal problems (see also \cite{BDNT,BFNT,BKN,CD,CHP,DGS,DP} and the references therein).

In \cite{FH} (see also \cite{GL})  the problem of minimizing $\lambda_1^T$ among sets with given measure is faced. The authors prove that 
\begin{equation}\label{fh}
\lambda_1^T(\Omega)\ge \lambda_1^T(B_1\cup B_2),
\end{equation}
where $B_1$ and $B_2$ are two disjoint balls having measure $|\Omega|/2$. Moreover, equality holds (for regular $\Omega$) if and only if $\Omega=B_1\cup B_2$. 

\begin{remark}
Since an eigenfunction corresponding to $\lambda_1^T(\Omega)$ changes its sign in $\Omega$, inequality \eqref{fh} has much in common with the Krahn-Szeg\H{o} inequality on the second eigenvalue of the Dirichlet-Laplacian (see, for example, \cite{H}).
\end{remark}

\noindent It is not a case that, in all the inequalities recalled above, balls play a fundamental role. This is because balls are isoperimetric sets for the Lebesgue measure.

In this paper  we are interested in the following nonlocal weighted eigenvalue problem
\begin{equation}\label{prob}
\left\{\begin{array}{ll}
-\diver\left(\gamma(x)\nabla u\right)= \lambda \gamma(x) \,u -\gamma(x)\left(\dfrac{1}{|\Omega|_\gamma}\dint_\Omega \diver\left(\gamma(x)\nabla u\right)\, dx\right)\quad &\mbox{in } \Omega
\\ 
u=0 & \mbox{on } \partial\Omega,
\end{array}
\right.
\end{equation}
where  $|\Omega|_\gamma=\int_\Omega \gamma(x)\, dx$ is the weighted measure of $\Omega$. We are interested in \emph{``good measures''} $d\gamma =\gamma(x) dx$ (see Section 3 for the definition), which allow us to
characterize the first weighted twisted  eigenvalue $\lambda_{1,\gamma}^T(\Omega)$ as follows 
\begin{equation}\label{mineigen}
\lambda_{1,\gamma}^T(\Omega)\, =\inf
 \left \{
 \frac{\displaystyle\int_\Omega|\nabla \phi|^2\, d\gamma}{\displaystyle\int_\Omega \phi^2\, d\gamma} : 
 \phi\in H^1_0(\Omega, d\gamma)\setminus\{0\}, \int_\Omega \phi\, d\gamma=0 \right\},
\end{equation}
and to prove the following
\begin{theorem}\label{teo:main}
Let $d \gamma$ be a ``good measure". Then 
\begin{equation}\label{mainineq} 
\lambda_{1,\gamma}^T(\Omega)\ge \lambda_{1,\gamma}^T(\omega_1\cup \omega_2),
\end{equation}
where $\omega_1$, $\omega_2$ are two disjoint isoperimetric sets (for the weighted perimeter with respect to the measure $d\gamma=\gamma(x)\,dx$),  such that $|\omega_1|_\gamma=|\omega_2|_\gamma=\frac{|\Omega|_\gamma}{2}$.
\end{theorem}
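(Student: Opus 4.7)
My plan is to combine a weighted Schwarz rearrangement, available because $d\gamma$ is a ``good measure'', with a balancing argument that identifies equal nodal pieces as optimal.

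Let $u\in H^1_0(\Omega,d\gamma)$ be a minimizer of the Rayleigh quotient in \eqref{mineigen}, obtained by the direct method. The zero-mean constraint forces $u$ to be sign-changing, so I would split $u=u_+-u_-$ with $u_\pm:=\max(\pm u,0)\in H^1_0(\Omega_\pm,d\gamma)$ supported on the disjoint nodal sets $\Omega_\pm:=\{\pm u>0\}$. The condition $\int_\Omega u\,d\gamma=0$ yields $\int u_+\,d\gamma=\int u_-\,d\gamma$, and $|\Omega_+|_\gamma+|\Omega_-|_\gamma=|\Omega|_\gamma$ up to a $d\gamma$-null set.

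Next I would apply the weighted Schwarz rearrangement built into the ``good measure'' hypothesis: this produces $u_\pm^*\in H^1_0(\Omega_\pm^*,d\gamma)$ on the isoperimetric sets $\Omega_\pm^*$ of the same weighted measure $|\Omega_\pm|_\gamma$, preserving $\int u_\pm\,d\gamma$ and $\int u_\pm^2\,d\gamma$ and lowering the weighted Dirichlet energy $\int|\nabla u_\pm|^2\,d\gamma$ via the weighted P\'olya--Szeg\H{o} inequality. Placing $\Omega_+^*$ and $\Omega_-^*$ disjointly, $\phi^*:=u_+^*-u_-^*$ is admissible for $\lambda^T_{1,\gamma}(\Omega_+^*\cup\Omega_-^*)$ with zero mean preserved and Rayleigh quotient no larger than that of $u$, giving $\lambda^T_{1,\gamma}(\Omega)\ge\lambda^T_{1,\gamma}(\Omega_+^*\cup\Omega_-^*)$.

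It then remains to prove the balancing inequality $\lambda^T_{1,\gamma}(I_a\cup I_b)\ge\lambda^T_{1,\gamma}(\omega_1\cup\omega_2)$ for any $a,b>0$ with $a+b=|\Omega|_\gamma$, where $I_a, I_b$ denote isoperimetric sets of the indicated weighted measure. I would attack this via the Euler--Lagrange equation: on a disjoint union of isoperimetric sets the twisted eigenfunction takes the form $c\,\psi_a^{(\lambda)}$ on $I_a$ and $c\,\psi_b^{(\lambda)}$ on $I_b$, where $\psi^{(\lambda)}$ solves the inhomogeneous Dirichlet problem $-\diver(\gamma\nabla\psi)-\lambda\gamma\psi=-\gamma$ with zero boundary data, so that $\lambda^T_{1,\gamma}(I_a\cup I_b)$ is the smallest positive root of
\[
\int_{I_a}\psi_a^{(\lambda)}\,d\gamma+\int_{I_b}\psi_b^{(\lambda)}\,d\gamma=0.
\]
Expanding $\psi_a^{(\lambda)}$ in the weighted Dirichlet eigenbasis $\{v_j^{(a)}\}$ of $I_a$ gives $\int_{I_a}\psi_a^{(\lambda)}\,d\gamma=\sum_j|\langle 1,v_j^{(a)}\rangle_\gamma|^2/(\lambda-\mu_j^{(a)})$, a manifestly monotone function of $\lambda$ between consecutive Dirichlet eigenvalues. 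Exploiting this monotonicity together with the $a\leftrightarrow b$ symmetry should identify $a=b=|\Omega|_\gamma/2$ as the minimizer; in that symmetric case the ansatz $v_1-v_2$ (with $v_1,v_2$ the first weighted Dirichlet eigenfunctions on $\omega_1,\omega_2$, normalized to have equal $d\gamma$-integrals) is automatically zero-mean and realizes $\lambda^T_{1,\gamma}(\omega_1\cup\omega_2)=\lambda^D_{1,\gamma}(\omega_1)$. The first two steps are a standard weighted-symmetrization template once the ``good measure'' framework supplies the weighted Faber--Krahn and P\'olya--Szeg\H{o} inequalities; the main obstacle I expect is precisely this final balancing step, namely converting the implicit eigenvalue equation into a clean monotonicity statement in the imbalance parameter $a-|\Omega|_\gamma/2$.
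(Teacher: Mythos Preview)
Your first reduction---splitting $u$ into nodal pieces, applying the weighted P\'olya--Szeg\H{o} inequality to each, and reassembling on disjoint isoperimetric sets---is exactly what the paper does in its Theorem~5.1. That part is fine.

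The gap is in the balancing step, and it is a genuine one. First, note that the paper does \emph{not} prove the balancing inequality for arbitrary good measures: it treats only the Gaussian and the power-like cases separately (Sections~6 and~7), and in each case the argument is concrete. The paper uses a shape-derivative optimality condition (Proposition~4.3: on a smooth optimal set the normal derivative of the eigenfunction has constant modulus on the boundary), writes the eigenfunction explicitly in terms of Hermite functions (Gaussian) or Bessel functions (power-like), and then shows by a direct monotonicity computation---Tur\'an's inequality for Hermite functions, a ratio inequality of Landau for Bessel functions---that the optimality condition fails unless the two pieces have equal measure.

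Your proposed route through the implicit equation $\sum_j |\langle 1,v_j^{(a)}\rangle_\gamma|^2/(\lambda-\mu_j^{(a)})+\sum_j |\langle 1,v_j^{(b)}\rangle_\gamma|^2/(\lambda-\mu_j^{(b)})=0$ is a correct characterization (away from the Dirichlet spectrum), and the monotonicity in $\lambda$ you observe is real. But it does not yield what you need: the twisted eigenvalue lies in the interval $(\mu_1^{(b)},\mu_1^{(a)})$ (say $b\ge a$), and as you slide mass from one piece to the other \emph{both} endpoints move, in opposite directions. The $a\leftrightarrow b$ symmetry only tells you that $a=|\Omega|_\gamma/2$ is a critical point of $a\mapsto \lambda_{1,\gamma}^T(I_a\cup I_{|\Omega|_\gamma-a})$; it does not tell you it is a minimum rather than a maximum, nor that it is the global minimum. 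You have not supplied any mechanism---second-order information, convexity, or a comparison argument---to close this, and you correctly flag it as the main obstacle. The paper's solution is precisely to abandon generality here and exploit explicit special-function identities in the two concrete cases.
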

The paper is organized as follows. After introducing some notation in Section 2, the definition of \emph{``good measure"} in Section 3, and discussing some basic properties of twisted weighted eigenvalues and eigenfunctions in Section 4, in Section 5 we reduce the problem of minimizing $\lambda_{1,\gamma}^T$ among sets with given weighted measure to the easier problem of minimizing $\lambda_{1,\gamma}^T$ among unions of two disjoint isoperimetric sets with given weighted measure. Sections 6 and 7 are devoted to completely prove Theorem \ref{teo:main} in the special cases $d\gamma= \pi^{-n/2}e^{-|x|^2}dx$ (Gaussian measure) and $d\gamma=x_n^kdx$ (power like measure).


\section{Notation and preliminary results}

Let $\gamma$ be a positive function on $\rn$ and let us consider the measure $d\gamma=\gamma(x)\,dx$. If  $\Omega$ is a set of locally finite perimeter,  we define the weighted perimeter and the weighted measure of $\Omega$ as follows 
\begin{equation*}
P_\gamma(\Omega)=\int_{\partial \Omega}\gamma(x)\,d\mathcal{H}^{n-1}, \qquad
|\Omega|_\gamma=\int_\Omega \gamma(x)\, dx.
\end{equation*}
We assume that \emph{the weighted isoperimetric problem is solved }for every $m>0$, that is, 
for any fixed positive weighted measure $m > 0$, we assume the existence of a set $
\Omega^\sharp \subset \rn$ of locally finite perimeter such that $
|\Omega^\sharp|_\gamma=m$ and 
\begin{equation}  \label{p1}
P_\gamma(\Omega^\sharp)=\min\left\{P_\gamma(A): \> A \subset \rn, \> A 
\mbox{ of
locally finite perimeter}, \> |A|_\gamma=m\right\}.
\end{equation}
Equivalently, we write
\begin{equation}
P_{\gamma }(\Omega )\geq P_{\gamma }(\Omega ^{\sharp}), \qquad |\Omega^\sharp|_\gamma=|\Omega|_\gamma.
\label{dis_isop}
\end{equation}
We  refer to such a set $\Omega^\sharp$ as an \emph{isoperimetric set with respect to the measure $d\gamma$}.

Now let us define the weighted rearrangement of a measurable function $u\, :\Omega \rightarrow \R$ (see, for example, \cite{CR}). 
If $u$ is such that its distibution function $\mu_u$ satisfies
\begin{equation*}
\mu_u(\theta)=  \left|\left\{ x\in \Omega :|u(x)|>\theta \right\}
\right|_\gamma <\infty ,\qquad \mbox{for every } \theta  > 0,
\end{equation*}
we define the decreasing rearrangement $u^{\ast }$ of $u$ as the generalized inverse of $\mu_u$, that is 
\begin{equation*}
u^{\ast }(s)= \inf \left\{ \theta \ge 0:\>\mu (\theta )\leq
s\right\} ,\qquad s\in \left( 0,|\Omega|_\gamma \right] .
\end{equation*}
We usually say that two functions $u$ and $v$ are equi-distributed, or
equivalently,  we say that $v$ is a rearrangement of $u$, if $u$ and $v$  share the
same distribution function, that is $\mu_u=\mu_v$. 

\noindent The determination of the cases of equality in the
isoperimetric inequality \eqref{dis_isop} leads to the
definition of a particular rearrangement of $u$, the weighted rearrangement $u^{\sharp
}$, as the unique function, which is equi-distributed with $u$ and  whose super-level sets are isoperimetric sets with the same weighted measure as the corresponding super-level sets of $u$.

\noindent Since, by definition, $u$, $u^\ast$ 
and $u^{\sharp }$ are equi-distributed, the Cavalieri principle implies
\begin{equation}
\left\Vert u\right\Vert_{L^{p}(\Omega, d\gamma )} = \Vert u^\ast\Vert_{L^p(0,|\Omega|_\gamma)}=\Vert
u^{\sharp }\Vert_{L^{p}(\Omega^{\sharp }, d\gamma )}, \qquad p \ge 1.
\label{Cavalieri}
\end{equation}

\noindent For our purposes we will also need the following
Hardy-Littlewood inequality.

\begin{lemma}
Let $u,v$  be measurable functions, such that
$$
\mu_u(\theta)<\infty,\qquad  \mu_v(\theta)<\infty, \qquad \mbox{for every } \theta >0.
$$ 
Then
\begin{equation}
\int_{\Omega }\left| u\,v\right| d\gamma\leq \int_{0}^{|\Omega|_\gamma}u^{\ast }(s)v^{\ast }(s)ds =\int_{\Omega^{\sharp } }
u^{\sharp }\,v^{\sharp } d\gamma. \label{HL}
\end{equation}
\end{lemma}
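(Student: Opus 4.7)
The natural tool here is the layer-cake identity
\[
|u(x)|\,|v(x)| = \int_0^\infty\!\!\int_0^\infty \chi_{\{|u|>t\}}(x)\,\chi_{\{|v|>s\}}(x)\, dt\, ds,
\]
which holds pointwise for measurable $u,v$. Integrating with respect to $d\gamma$ over $\Omega$ and swapping the order of integration by Tonelli rewrites the left-hand side of \eqref{HL} as
\[
\int_0^\infty\!\!\int_0^\infty \bigl|\{|u|>t\}\cap\{|v|>s\}\bigr|_\gamma\, dt\, ds,
\]
and the trivial estimate $|\{|u|>t\}\cap\{|v|>s\}|_\gamma \le \min(\mu_u(t),\mu_v(s))$ already contains everything needed for the inequality.

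To convert the bound $\min(\mu_u,\mu_v)$ into a product of rearrangements, I use monotonicity: since $u^\ast$ and $v^\ast$ are non-increasing, $\{r: u^\ast(r)>t\}$ is the interval $(0,\mu_u(t))$ and similarly for $v^\ast$, whence
\[
\min(\mu_u(t),\mu_v(s)) = \int_0^{|\Omega|_\gamma} \chi_{\{u^\ast>t\}}(r)\,\chi_{\{v^\ast>s\}}(r)\, dr.
\]
Plugging this back, running Tonelli in reverse, and collapsing the $(t,s)$-integration via the one-dimensional layer-cake formula for the product $u^\ast(r)\,v^\ast(r)$ produces the upper bound $\int_0^{|\Omega|_\gamma} u^\ast(s) v^\ast(s)\, ds$ and hence the inequality part of \eqref{HL}.

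For the equality on the right I would repeat the layer-cake computation on $\Omega^\sharp$ with $u^\sharp, v^\sharp$ in place of $u,v$. Equi-distribution gives $|\{u^\sharp>t\}|_\gamma=\mu_u(t)$ and $|\{v^\sharp>s\}|_\gamma=\mu_v(s)$, and by construction each super-level set is an isoperimetric set for $d\gamma$. The crucial structural property built into the notion of a \emph{good measure} is that the family of isoperimetric sets of varying weighted measure is totally ordered by inclusion (half-spaces in the Gaussian case, half-balls in the power-like case), so of $\{u^\sharp>t\}$ and $\{v^\sharp>s\}$ one is contained in the other, and their intersection has weighted measure exactly $\min(\mu_u(t),\mu_v(s))$. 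Substituting into the layer-cake identity promotes the previous inequality to an equality.

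The main obstacle is really the equality on the right. The inequality itself is a routine distributional computation once the layer-cake representation is written down, but the identity $\int_0^{|\Omega|_\gamma} u^\ast v^\ast\, ds = \int_{\Omega^\sharp} u^\sharp v^\sharp\, d\gamma$ genuinely relies on the nestedness of isoperimetric sets, which is precisely the feature that justifies introducing the good-measure framework of Section 3.
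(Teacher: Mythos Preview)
The paper does not actually prove this lemma: it is stated as the classical Hardy--Littlewood inequality and left unproved, with the general reference \cite{CR} covering the rearrangement background. So there is no ``paper's own proof'' to compare against.

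Your argument is the standard layer-cake proof and is correct for both the inequality and the equality. One small point of attribution: the nestedness of the super-level sets of $u^\sharp$ and $v^\sharp$ is not really a consequence of the \emph{good measure} axioms in Definition~\ref{dens} (none of items (1)--(4) says the isoperimetric family is totally ordered); rather, it is implicit in the paper's Section~2 definition of $u^\sharp$ as \emph{the unique} function equi-distributed with $u$ whose super-level sets are isoperimetric. That uniqueness already forces a canonical nested family (explicitly $\{x_1>a\}$ in the Gaussian case, half-balls about a fixed center in the power case), and with that convention in place your argument that $\bigl|\{u^\sharp>t\}\cap\{v^\sharp>s\}\bigr|_\gamma=\min(\mu_u(t),\mu_v(s))$ goes through. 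Alternatively, and perhaps more directly in the spirit of the paper, the right-hand equality is immediate once one writes $u^\sharp=u^\ast\circ F$ and $v^\sharp=v^\ast\circ F$ for the same measure-preserving map $F:\Omega^\sharp\to(0,|\Omega|_\gamma)$ (e.g.\ $F(x)=k(x_1)$ in the Gaussian case), and then changes variables.
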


In this paper we consider measures $d\gamma$ such that a  P\'{o}lya-Szeg\H{o} principle with respect to $d\gamma$ holds true. In order to properly state it, we recall the definition of weighted Sobolev space.

 \begin{definition}
 The weighted Sobolev space $H^1(\Omega, d\gamma)$ is the set of all
functions
 $\phi \in W^{1,1}_{loc}(\Omega)$ such that $(\phi,|D\phi|) \in L^2(\Omega,d\gamma)\times
L^2(\Omega,d\gamma)$,
 endowed with the norm
 $$
 ||\phi||_{H^1(\Omega,d\gamma)} =
||\phi||_{L^2(\Omega,d\gamma)}+||D\phi||_{L^2(\Omega,d\gamma)}.
 $$
  The weighted Sobolev space $H_0^1(\Omega,d \gamma)$
  is the closure of $C_0^\infty(\Omega)$ in $H^1(\Omega, d\gamma)$.  
  \end{definition}

Let $\Omega$ be an open subset of $\R^n$ with finite weighted measure and let  $u\in H^1_0(\Omega,d\gamma )$. We say that \emph{a P\'olya-Szeg\H o principle} holds true if  $u^\sharp \in H_0^1(\Omega^\sharp,d\gamma)$ and 
\begin{equation}
\int_{\Omega}|\nabla u|^{2}d\gamma \geq \int_{\Omega^\sharp}|\nabla u^{\sharp}|^{2}d\gamma.  \label{Polya_Szego_Eh}
\end{equation}
 Moreover equality sign holds in \eqref{Polya_Szego_Eh} if and only if $u=u^{\sharp}$ (modulo elementary transformations).

\medskip
\noindent 
We will deal with a class of measures such that \emph{a weighted isoperimetric inequality} and \emph{a P\'olya-Szeg\H o principle} hold trues.  
In the next section we collect some examples of these measure.


\section{Good measures}\label{sec_dens}

In this section we introduce a class of measures, that we will call \emph{``good measures''}, for which Theorem 1.1 holds true, and we provide some examples of measures belonging to such a  class.  The main property (4) below consists in the possibility of separating two isoperimetric sets.

\begin{definition}\label{dens}
Let $\gamma$ be a positive function on $\rn$. We say that  $d\gamma=\gamma (x) dx$ is a \emph{good measure} if 
\begin{enumerate}
\item the isoperimetric problem \eqref{p1} has a solution for any $m>0$;
\item for every open set $\Omega$ with finite weighted measure, the embedding $H_0^1(\Omega,d\gamma) \hookrightarrow L^2(\Omega,d\gamma)$ is compact;
\item for every open set $\Omega$ with finite weighted measure, the P\'olya-Szeg\H o principle \eqref{Polya_Szego_Eh} holds true;
\item for every open set $\Omega$ with finite weighted measure and for any $ m_1,m_2\ge 0$ such that $m_1+m_2= |\Omega|_\gamma$, two isoperimetric sets $\omega_1, \omega_2$ exist such that $|\omega_1|=m_1$, $|\omega_2|=m_2$ and $\omega_1\cap\omega_2=\emptyset$.
\end{enumerate}
\end{definition}

\subsection{Example 1}
Let $d\gamma=dx$, $x \in \rn$, denote the usual Lebesgue measure in $\rn$. Obviously $d\gamma$ is a good measure in $\R^n$. Indeed, the classical isoperimetric inequality holds true with balls as isoperimetric sets, ensuring the validity of conditions (1) and (4); the classical Sobolev embedding theorem holds true ensuring the validity of (2);  if $u^\sharp$ stands for the Schwarz symmetrizand of $u$, the classical P\'olya-Szeg\H o inequality holds true  (see \cite{deGiorgi}, \cite{PS}).

\subsection{Example 2}
Let $d\gamma= \pi^{-n/2} e^{-|x|^2}\, dx$, $x\in \rn$, denote the usual Gaussian measure in $\rn$, normalized so that $\gamma(\rn)=1$. Then $d\gamma)$ is a good measure in $\rn$. Indeed, conditions (1) and (4) are satisfied as a consequence of the isoperimetric inequality  for   Gaussian measure  (see \cite{B,CK,E}), that we can state as follows. 
\begin{theorem}
 Let $\Omega $ be any measurable subset of ${\mathbb{R}}^{n}$
  having finite Gaussian perimeter and let $\Omega ^{\sharp
}$ be the half-space 
\begin{equation*}\label{halfspace}
\Omega^\sharp=\left\{ x=(x_1,x_2,...,x_n) \in\R^n:\>x_1>a\right\} ,  
\end{equation*}
where $a$ is taken so that $|\Omega^\sharp|_\gamma=|\Omega|_\gamma.$ Then
\begin{equation}
P_{\gamma }(\Omega )\geq P_{\gamma }(\Omega ^{\sharp}).
\label{dis_isop_gauss}
\end{equation}
Moreover equality holds in (\ref{dis_isop_gauss}) if and only if $\Omega
=\Omega ^{\sharp},$ modulo a rotation.
\end{theorem}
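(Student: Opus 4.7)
My plan is to establish the Gaussian isoperimetric inequality via the classical Sudakov--Tsirelson reduction to L\'evy's spherical isoperimetric inequality. This approach makes transparent why half-spaces are the extremal sets: they arise naturally as limits of spherical caps under Poincar\'e's projection from high-dimensional spheres down to $\mathbb{R}^n$.

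First I would recall L\'evy's spherical isoperimetric theorem: among Borel subsets of the sphere $S^{N-1}_R\subset\mathbb{R}^N$ of prescribed normalized surface measure, geodesic caps minimize the $(N-2)$-dimensional Hausdorff measure of the boundary; this can be proved by Schmidt's two-point symmetrization or by standard first-variation arguments. Next I would invoke Poincar\'e's lemma: if $X^{(N)}$ is uniformly distributed on $S^{N-1}_{\sqrt{N}}$, then the projection of $X^{(N)}$ onto its first $n$ coordinates converges, as $N\to\infty$, to the standard Gaussian law on $\mathbb{R}^n$. This follows from the explicit projected density, which is proportional to $(1-|x|^2/N)_+^{(N-n-2)/2}$ and converges pointwise and dominatedly to a multiple of $e^{-|x|^2/2}$; after the rescaling $x\mapsto x/\sqrt{2}$ this yields precisely the density $\pi^{-n/2}e^{-|x|^2}$ used in the statement.

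Given a set $\Omega\subset\mathbb{R}^n$ with smooth boundary, consider the cylindrical slice
\[
\widetilde{\Omega}^{(N)} = (\Omega\times\mathbb{R}^{N-n})\cap S^{N-1}_{\sqrt{N}}.
\]
Using the coarea formula and dominated convergence for the projected density, one checks that the normalized $(N-1)$-measure of $\widetilde{\Omega}^{(N)}$ tends to $|\Omega|_\gamma$ and the normalized $(N-2)$-measure of $\partial\widetilde{\Omega}^{(N)}$ tends to $P_\gamma(\Omega)$. Since the half-spaces of $\mathbb{R}^n$ are precisely the projections of those geodesic caps whose axis lies in the first $n$ coordinates, applying L\'evy's inequality to $\widetilde{\Omega}^{(N)}$ and letting $N\to\infty$ yields $P_\gamma(\Omega)\ge P_\gamma(\Omega^\sharp)$. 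A standard mollification plus weighted $BV$-density argument then removes the smoothness assumption and covers the full class of sets of finite Gaussian perimeter.

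The main obstacle is the rigidity statement: the Sudakov--Tsirelson limit provides only the non-strict inequality, because uniqueness of spherical minimizers need not transfer through the limit. To close this gap I would appeal to the sharper arguments of Carlen--Kerce, who use heat-flow regularization and a quantitative form of L\'evy's theorem to track the equality cases, or equivalently to Ehrhard's Gaussian symmetric decreasing rearrangement, which provides a direct rigidity statement. Either route forces $\Omega$, up to a rotation, to coincide with the half-space $\Omega^\sharp$, completing the characterization of equality.
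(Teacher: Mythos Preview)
The paper does not actually prove this theorem: it is quoted as a known result, with references to Borell, Carlen--Kerce, and Ehrhard. So there is no in-paper argument to compare your proposal against.

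That said, your outline is a legitimate classical route---essentially the Sudakov--Tsirelson argument, close in spirit to Borell's original proof. The Poincar\'e projection limit and the reduction to L\'evy's spherical inequality are correct in principle. The step you pass over most quickly---that the normalized $(N-2)$-dimensional boundary measure of $\widetilde{\Omega}^{(N)}$ converges to $P_\gamma(\Omega)$---is where genuine care is required; it can be made rigorous by writing the spherical perimeter as an integral against the projected density and passing to the limit, but ``coarea plus dominated convergence'' hides nontrivial work. You correctly flag that rigidity does not survive the limit and must be obtained separately. One small inaccuracy: Carlen--Kerce do not proceed via a quantitative L\'evy theorem or heat flow; their equality analysis runs through Bobkov's functional inequality and a study of its extremals. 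Ehrhard's approach, which the paper also cites, is quite different from yours: it avoids the sphere altogether, working directly with a Gaussian symmetrization that yields the inequality (and much of the rigidity) intrinsically.
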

\begin{remark}
A straightforward calculation gives
\begin{equation*}
a=k^{-1}(|\Omega|_\gamma),
\end{equation*}
where $k(t)$ is the   function
\begin{equation}
k(t)= \frac{1}{\sqrt{\pi }}\int_{t}^{\infty }\exp (-\sigma
^{2})d\sigma\,, \qquad t\in \R.  \label{compl_err_funct}
\end{equation}
\end{remark}

\noindent Moreover, a P\'olya-Sz\'eg\H{o} principle also holds true with respect to the Gaussian measure and therefore condition (3) in Definition \ref{dens} is also satisfied. Indeed, 
according to the definition of weighted rearrangement given in the previous section, 
we can  define the rearrangement of a measurable function $u\, :\Omega \rightarrow \R$, $u^{\sharp
}$, as follows
\[
u^{\sharp }(x)=u^{\ast }(k(x_{1})),\text{ }x\in \Omega ^{\sharp},
\]
where $k$ is defined in (\ref{compl_err_funct}).
By definition, the super-level sets of  $u^{\sharp }$ are half-spaces, so $u^\sharp$ actually depends on one variable only
(say $x_{1}$) and it is an increasing function with respect to it.  The following P\'{o}lya-Sz\'eg\H{o} principle is shown in 
 \cite{CK,E,T}.

\begin{theorem} 
 Let  $u\in H^1_0(\Omega,d\gamma )$. Then $u^\sharp \in H_0^1(\Omega^\sharp,d\gamma)$ and 
\begin{equation*}
\int_{\Omega}|\nabla u|^{2}d\gamma \geq \int_{\Omega^\sharp}|\nabla u^{\sharp}|^{2}d\gamma.  \label{Polya_Szego_Eh2}
\end{equation*}
 Moreover equality holds if and only if $\Omega=\Omega^\sharp$ and $u=u^{\sharp},$ modulo a rotation.
\end{theorem}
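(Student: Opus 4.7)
The plan is to follow the classical P\'olya--Szeg\H{o} scheme built on the coarea formula and the Gaussian isoperimetric inequality just stated. By truncation and density I reduce to the case where $u$ is smooth, nonnegative and compactly supported in $\Omega$; the membership $u^\sharp\in H_0^1(\Omega^\sharp,d\gamma)$ will then follow from the resulting uniform energy bound, the equi-distribution identity \eqref{Cavalieri} and lower semicontinuity.

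The core computation starts from the weighted distribution function $\mu(t)=|\{u>t\}|_\gamma$. Two applications of the Federer coarea formula give, for a.e.\ $t$ via Sard's theorem,
$$
\int_\Omega|\nabla u|^2\,d\gamma=\int_0^{\sup u}\left(\int_{\{u=t\}}|\nabla u|\,\gamma\,d\mathcal{H}^{n-1}\right)dt,\qquad -\mu'(t)=\int_{\{u=t\}}\frac{\gamma}{|\nabla u|}\,d\mathcal{H}^{n-1}.
$$
The Cauchy--Schwarz inequality then yields $P_\gamma(\{u>t\})^2\le \bigl(\int_{\{u=t\}}|\nabla u|\,\gamma\,d\mathcal{H}^{n-1}\bigr)(-\mu'(t))$, and combining with the Gaussian isoperimetric inequality $P_\gamma(\{u>t\})\ge P_\gamma(\{u>t\}^\sharp)$ gives
$$
\int_{\{u=t\}}|\nabla u|\,\gamma\,d\mathcal{H}^{n-1}\ge \frac{P_\gamma(\{u>t\}^\sharp)^2}{-\mu'(t)}.
$$
Because $u^\sharp$ depends only on $x_1$ and is monotone in that variable, $|\nabla u^\sharp|$ is constant on each level set $\{u^\sharp=t\}$, so Cauchy--Schwarz is an equality for $u^\sharp$ and the right-hand side above coincides with $\int_{\{u^\sharp=t\}}|\nabla u^\sharp|\,\gamma\,d\mathcal{H}^{n-1}$. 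Integrating in $t$ and invoking the coarea formula once more for $u^\sharp$ produces the claimed inequality.

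The equality case is where I expect the real work. Equality throughout forces, for a.e.\ $t$, simultaneous equality in the Gaussian isoperimetric inequality (so $\{u>t\}$ is a half-space, \emph{a priori} up to a $t$-dependent rotation) and in Cauchy--Schwarz (so $|\nabla u|$ is constant on $\{u=t\}$). The delicate step is upgrading the $t$-dependent rotation to a single one: two strictly nested half-spaces are forced to have parallel boundary normals, so the monotone nesting of the super-level sets in $t$ pins the outward normal to a single direction. This common rotation maps $\Omega$ onto $\Omega^\sharp$ and identifies $u$ with $u^\sharp$.
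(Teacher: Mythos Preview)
The paper does not prove this theorem; it merely quotes it from the literature (Ehrhard, Carlen--Kerce, Talenti, references \cite{CK,E,T} in the paper). So there is no ``paper's own proof'' to compare against beyond the cited sources.

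Your sketch is precisely the classical argument that those references contain: coarea formula plus Cauchy--Schwarz on each level set, fed with the Gaussian isoperimetric inequality, yields the energy comparison; the one-dimensional structure of $u^\sharp$ makes Cauchy--Schwarz saturate on the symmetrized side. This is Ehrhard's proof of the inequality itself. For the equality case your outline matches Carlen--Kerce: equality in isoperimetry forces each super-level set to be a half-space, and the strict nesting $\{u>t_1\}\supsetneq\{u>t_2\}$ for $t_1<t_2$ forces all these half-spaces to share a common normal, since two half-spaces with non-parallel boundaries can never be strictly nested. One caveat worth flagging: the density/truncation reduction you invoke at the start is safe for the inequality but does \emph{not} by itself transport the equality analysis---approximating $u$ by smooth compactly supported functions destroys exact equality. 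The rigorous treatment (as in \cite{CK}) works directly with $u\in H_0^1(\Omega,d\gamma)$, using that the isoperimetric equality case and the a.e.\ structure of level sets survive without the smoothness reduction. If you intend a self-contained proof rather than a citation, that is the point to tighten.
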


\noindent Eventually, condition (2)  in Definition \ref{dens} is also satisfied (see, for example, \cite{CP}).

\subsection{Example 3}
On the upper half-space, define $d\gamma=x_n^k\,dx$, $x=(x_1,x_2,...,x_n) \in \rn$, where $x_n>0$, $k>0$. Then $d\gamma $ is a good measure in $\rn$. Indeed, conditions (1) and (4) follow from the validity of the following weighted isoperimetric inequality, proved in \cite{MadSalsa} when $n=2$, and in \cite{BCM3,CabreRos} whenever $n>2$ (see also \cite{ABCMP1,ABCMP2,ABCMP3}).
\begin{theorem}
 Let $\Omega $ be any bounded Lipschitz subset  of $\rn$
  having finite weighted perimeter and let $\Omega ^{\sharp
}$ be the upper half-ball centered at some $x_0$, belonging to the upper half-space $\{x\in \rn\,:\, x_n>0\}$, that is
\begin{equation*}\label{halfball}
\Omega^\sharp=B_r(x_0)\cap \left\{ x=(x_1,x_2,...,x_n) \in\R^n:\>x_n>0\right\} ,  
\end{equation*}
where $r>0$ is taken so that $|\Omega^\sharp|_\gamma=|\Omega|_\gamma.$ 
 Then
\begin{equation}
P_{\gamma }(\Omega )\geq P_{\gamma }(\Omega ^{\sharp}).
\label{dis_isoppower}
\end{equation}
Moreover equality holds in (\ref{dis_isoppower}) if and only if $\Omega
=\Omega ^{\sharp},$ modulo a translation.
\end{theorem}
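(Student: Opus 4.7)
My plan is to combine Steiner symmetrization in the horizontal directions with a lifting to higher-dimensional Euclidean space (for integer $k$), extending to general $k$ via the ABP method of Cabr\'e--Ros-Oton.

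First, since the weight $\gamma(x) = x_n^k$ depends only on $x_n$, I would Steiner-symmetrize $\Omega$ in each horizontal hyperplane $\{x_n = t\}$. For a.e.\ $t > 0$, the horizontal slice $\Omega_t = \{x' \in \R^{n-1} : (x', t) \in \Omega\}$ is replaced by a ball of the same $(n-1)$-dimensional measure; the weighted measure $|\Omega|_\gamma = \int_0^\infty t^k |\Omega_t|\, dt$ is preserved, while the weighted perimeter does not increase, since the weight $t^k$ is constant on each slice and the classical slicewise isoperimetric argument carries over. This reduces the problem to axially symmetric sets of the form $\Omega = \{(x', x_n) : |x'| < r(x_n),\ x_n > 0\}$ for some measurable $r$.

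Second, when $k$ is a positive integer, I would lift each such $\Omega$ to the rotationally symmetric set
\[
\widetilde \Omega = \bigl\{(x', y) \in \R^{n-1} \times \R^{k+1} : (x', |y|) \in \Omega \bigr\} \subset \R^{n+k}.
\]
Polar coordinates in the $y$ variable give a constant $c_k > 0$ with $|\widetilde \Omega| = c_k |\Omega|_\gamma$ and $\mathcal{H}^{n+k-1}(\partial \widetilde \Omega) = c_k\, P_\gamma(\Omega)$. The classical Euclidean isoperimetric inequality in $\R^{n+k}$ then forces $\widetilde \Omega$ to be a ball of the corresponding volume, and its $O(k+1)$-symmetry in $y$ forces the ball to be centered on the subspace $\{y = 0\}$, which back in $\R^n$ corresponds exactly to a half-ball centered on $\{x_n = 0\}$. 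Equality is inherited from equality in the Euclidean isoperimetric inequality, giving $\Omega = \Omega^\sharp$ up to a horizontal translation.

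For non-integer $k > 0$ the lifting trick is unavailable, and I would fall back on the ABP method of Cabr\'e--Ros-Oton: after regularizing $\Omega$, solve the weighted Neumann problem $\diver(x_n^k \nabla u) = \tfrac{P_\gamma(\Omega)}{|\Omega|_\gamma}\, x_n^k$ in $\Omega$ with $\partial_\nu u = 1$ on $\partial \Omega$, and then estimate $|\Omega|_\gamma$ from above by integrating $x_n^k$ over the image under $\nabla u$ of the lower contact set of $u$ with its convex envelope. Comparison with the analogous construction on $\Omega^\sharp$ yields the sharp inequality, with equality forcing the contact set to exhaust $\Omega$ and pinning the geometry. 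The main obstacle is precisely this step: controlling the degeneracy of the weight at $\{x_n = 0\}$ while running the ABP estimate requires careful regularization and sharp tracking of constants, and is the technical heart of the Cabr\'e--Ros-Oton proof.
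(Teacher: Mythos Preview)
The paper does not prove this theorem; it is stated in Section~3 (Example~3) as a known result, with proofs attributed to Maderna--Salsa for $n=2$ and to Brock--Chiacchio--Mercaldo and Cabr\'e--Ros-Oton for $n>2$. So there is no in-paper argument to compare against.

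Your sketch is sound and, for the general $k>0$, is precisely the ABP approach of Cabr\'e--Ros-Oton that the paper cites. The lifting argument for integer $k$ is also classical and correct: rotating the last coordinate into $\R^{k+1}$ turns the monomial weight into the Lebesgue Jacobian, and the Euclidean isoperimetric inequality in $\R^{n+k}$ (with its equality case) pulls back to the half-ball statement. One minor comment: the preliminary Steiner symmetrization is not actually needed for either route --- both the lifting and the ABP argument apply directly to arbitrary $\Omega$ --- so that first step can be omitted. Your honest acknowledgement that the degeneracy of $x_n^k$ at $\{x_n=0\}$ is the technical crux of the ABP proof is accurate; that is indeed where the work lies in the cited reference, and your outline does not supply those details, so as written it is a correct roadmap rather than a self-contained proof.
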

\noindent A P\'olya-Sz\'eg\H{o} type principle also holds true with respect to  measure $d\gamma$ (see \cite{T}), and therefore condition (3) in Definition \ref{dens} holds true. Indeed, 
according to the definition of weighted rearrangement given in the previous section, 
we can define the rearrangement $u^\sharp$ with respect to
measure $d\gamma$ of any measurable function $u\, :\Omega \rightarrow \R$ as the rearrangement of $u$
whose super-level sets are upper half-balls having the same weighted measure as the corresponding super-level sets of $u$. 
By definition $u^{\sharp }$ is actually a radial function, depending on the only variable  $r=|x-x_0|$, and it is a decreasing function with respect to $r$.
\begin{theorem} 
 Let  $u\in H^1(\Omega,d\gamma )$. Then $u^\sharp \in H_0^1(\Omega^\sharp,d\gamma)$ and 
\begin{equation*}
\int_{\Omega}|\nabla u|^{2}d\gamma \geq \int_{\Omega^\sharp}|\nabla u^{\sharp}|^{2}d\gamma.  \label{Polya_Szego_power}
\end{equation*}
 Moreover equality holds if and only if $u=u^{\sharp},$ modulo a translation.
\end{theorem}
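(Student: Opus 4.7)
The plan is to follow the classical coarea/isoperimetry scheme underlying all Pólya–Szegő inequalities, specialized to the weight $\gamma(x) = x_n^k$. Without loss of generality I assume $u \in H_0^1(\Omega,d\gamma)$ is nonnegative, and I set $\mu(t) = |\{u > t\}|_\gamma$. By construction $u^\sharp$ is radial about some base point $x_0 \in \{x_n = 0\}$, which I take to be the origin, and decreasing in $r = |x|$; its superlevel sets are precisely the upper half-balls supplied by the isoperimetric inequality \eqref{dis_isoppower}.

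The core estimate assembles three ingredients. The weighted coarea formula and Fleming–Rishel identity give, for a.e.\ $t > 0$,
\[
\int_\Omega |\nabla u|^2 \, d\gamma = \int_0^\infty \!\! \int_{\{u = t\}} |\nabla u|\, \gamma \, d\mathcal{H}^{n-1} \, dt, \qquad -\mu'(t) = \int_{\{u = t\}} \frac{\gamma}{|\nabla u|} \, d\mathcal{H}^{n-1}.
\]
Cauchy–Schwarz on each level set produces
\[
P_\gamma(\{u > t\})^2 \;\leq\; \bigl(-\mu'(t)\bigr) \int_{\{u = t\}} |\nabla u|\, \gamma \, d\mathcal{H}^{n-1},
\]
and the weighted isoperimetric inequality \eqref{dis_isoppower} gives $P_\gamma(\{u > t\}) \geq P_\gamma(\{u^\sharp > t\})$. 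Combining and integrating in $t$ yields
\[
\int_\Omega |\nabla u|^2 \, d\gamma \;\geq\; \int_0^\infty \frac{P_\gamma(\{u^\sharp > t\})^2}{-\mu'(t)} \, dt.
\]
For $u^\sharp$ this chain is saturated, because $|\nabla u^\sharp|$ is constant on its level sets (Cauchy–Schwarz becomes equality) and its superlevel sets already realize the isoperimetric minimum, so the right-hand side equals $\int_{\Omega^\sharp} |\nabla u^\sharp|^2 \, d\gamma$. The membership $u^\sharp \in H_0^1(\Omega^\sharp, d\gamma)$ then follows by a standard truncation/approximation argument combined with the finiteness of this integral and the zero-trace property of $u$.

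The hard part is the equality characterization. Equality across the chain forces two coupled rigidities: the isoperimetric inequality \eqref{dis_isoppower} must be saturated on almost every superlevel set $\{u > t\}$, and $|\nabla u|$ must be constant on a.e.\ level set $\{u = t\}$. The rigidity part of \eqref{dis_isoppower} then identifies each $\{u > t\}$, up to a translation $x_0(t)$ parallel to $\{x_n = 0\}$, with an upper half-ball of radius $r(t)$. The genuine obstacle is to upgrade these $t$-dependent translations to a single translation; this is the Brothers–Ziemer step. I would exploit the nested inclusion $\{u > t_1\} \subset \{u > t_2\}$ for $t_1 > t_2$: both sides are upper half-balls with base on $\{x_n = 0\}$, so projecting onto that hyperplane yields nested $(n-1)$-dimensional balls whose centers must coincide once one rules out a plateau $\{|\nabla u| = 0\}$ of positive weighted measure (this is where the Cauchy–Schwarz equality and the absolute continuity of $\mu$ enter). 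The conclusion $u = u^\sharp$ modulo a translation parallel to $\{x_n = 0\}$ then follows.
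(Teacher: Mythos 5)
Two remarks before the substance: the paper itself contains no proof of this theorem --- it is quoted from Talenti \cite{T} as one of the properties making $d\gamma=x_n^k\,dx$ a good measure --- so your argument has to stand on its own rather than be matched against an in-paper proof. Your inequality part is the standard Talenti-type coarea/isoperimetry scheme and is essentially sound, with two technical points you should not state as identities: the Fleming--Rishel relation $-\mu'(t)=\int_{\{u=t\}}\gamma|\nabla u|^{-1}\,d\mathcal{H}^{n-1}$ holds in general only as an inequality ($\mu$ can carry a singular part coming from plateaus of $u$), and the claimed saturation of the chain for $u^\sharp$ is not automatic --- it requires knowing that $u^{\ast}$ is locally absolutely continuous. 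The cleaner route is to prove the exact formula $\int_{\Omega^\sharp}|\nabla u^\sharp|^2\,d\gamma=\int_0^{|\Omega|_\gamma}I_\gamma(s)^2\bigl(-du^{\ast}/ds\bigr)^2\,ds$, where $I_\gamma$ is the isoperimetric profile of the weight, and to bound $\int_{\Omega}|\nabla u|^2\,d\gamma$ from below by the same quantity; this avoids dividing by $-\mu'(t)$ altogether. (Also, the hypothesis should be read as $u\in H^1_0(\Omega,d\gamma)$, as in the Gaussian case, which you have implicitly done.)

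The genuine gap is in the equality case. From equality you correctly extract that almost every superlevel set is an upper half-ball $B_{r(t)}(x_0(t))$ centered on $\{x_n=0\}$ and that $|\nabla u|$ is constant on almost every level set, but your passage to a single center --- ``projecting onto the hyperplane yields nested $(n-1)$-dimensional balls whose centers must coincide once one rules out a plateau'' --- is not a valid step: nested balls of strictly different radii can perfectly well have distinct centers, and excluding plateaus does nothing by itself to prevent the center $x_0(t)$ from drifting with $t$. Forcing $x_0(t)\equiv x_0$ is precisely the content of the Brothers--Ziemer rigidity theorem and is the hard part of any equality statement for a P\'olya--Szeg\H{o} inequality; it requires the finer consequences of equality (the gradient of $u$ on $\{u=t\}$ agrees with that of $u^\sharp$ on the corresponding sphere, absolute continuity of $u^{\ast}$, fine properties of sets of finite perimeter) and a quantitative argument showing that the coarea shell $\{t<u\le t+h\}$, being pinched between two half-spheres with constant normal speed, forces concentricity. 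Moreover, already in the unweighted case the rigidity conclusion is known to fail without the additional hypothesis that $\{\nabla u^\sharp=0\}\cap\{0<u^\sharp<\mathrm{ess\,sup}\,u\}$ has measure zero; so the statement you are proving needs either this assumption or an argument exploiting the specific weight, and your sketch supplies neither. As it stands, the equality characterization is asserted rather than proved.
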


\noindent Finally condition (2)  of Definition \ref{dens} is also satisfied (see, for example,   \cite{BCM3}).

\subsection{Counter-examples}
The  measures $d\delta_1(x)=e^{|x|^2}dx$ or $d\delta_2(x)=x_n^k\,e^{|x|^2}dx$, with  $x_n>0$ and $k>0$, are not good measures, since they do not satisfy condition (4) in Definition \ref{dens}.
Indeed, the isoperimetric sets with respect to the measure $d\delta_1$ are concentric balls around the origin, while isoperimetric sets with respect to the measure $d\delta_2$ are concentric half-balls around the origin contained in the  upper half-space $\{x\in \rn\, :\, x_n>0   \}$ (see \cite{CRS,C,FM,RCBM}).  

\bigskip


\section{Basic properties of weighted twisted eigenvalues and eigenfunctions}

In this section we prove some properties of weighted twisted eigenvalues with respect to a good measure. We  start with some classical ones, whose proofs use standard arguments. We repeat them here for the sake of completeness.  
\begin{proposition}
	\label{exist}
Assume that $d\gamma$ is a good measure. Then problem \eqref{mineigen} admits a minimizer $ u\in H^1_0(\Omega, d\gamma)$.
\end{proposition}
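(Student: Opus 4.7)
The plan is to apply the direct method of the calculus of variations. Let me set up the argument.

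First I would take a minimizing sequence $\{u_k\} \subset H_0^1(\Omega,d\gamma)$ for the infimum in \eqref{mineigen}, satisfying $\int_\Omega u_k\, d\gamma=0$ and normalized so that $\int_\Omega u_k^2\, d\gamma=1$. By definition of the infimum, $\int_\Omega|\nabla u_k|^2\, d\gamma \to \lambda_{1,\gamma}^T(\Omega)$, so the sequence $\{u_k\}$ is bounded in $H_0^1(\Omega,d\gamma)$. Since this is a Hilbert space, a subsequence (not relabeled) converges weakly to some $u \in H_0^1(\Omega,d\gamma)$.

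Next I would use property (2) in the definition of good measure: the embedding $H_0^1(\Omega,d\gamma)\hookrightarrow L^2(\Omega,d\gamma)$ is compact. Hence, along a further subsequence, $u_k\to u$ strongly in $L^2(\Omega,d\gamma)$. This gives two things simultaneously: the normalization passes to the limit, so $\int_\Omega u^2\, d\gamma=1$ and in particular $u\not\equiv 0$; and the zero mean value constraint passes to the limit, because $\gamma\in L^1(\Omega,dx)$ (as $|\Omega|_\gamma<\infty$) makes $\phi\mapsto\int_\Omega \phi\, d\gamma$ a continuous linear functional on $L^2(\Omega,d\gamma)$ by Cauchy--Schwarz, so $\int_\Omega u\, d\gamma=\lim_k \int_\Omega u_k\, d\gamma=0$. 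Therefore $u$ belongs to the admissible class.

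Finally, the weak lower semicontinuity of the norm in $L^2(\Omega,d\gamma)$ applied to $\nabla u_k \rightharpoonup \nabla u$ yields
\begin{equation*}
\int_\Omega|\nabla u|^2\, d\gamma \le \liminf_{k\to\infty}\int_\Omega|\nabla u_k|^2\, d\gamma =\lambda_{1,\gamma}^T(\Omega).
\end{equation*}
Combined with $\int_\Omega u^2\, d\gamma=1$, the Rayleigh quotient of $u$ is bounded above by $\lambda_{1,\gamma}^T(\Omega)$, hence equals it, and $u$ is the desired minimizer.

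I do not anticipate a serious obstacle here: the whole argument is a routine application of the direct method, and the two non-trivial ingredients (compact embedding to pass the normalization and the mean-value constraint to the limit, weak lower semicontinuity of the Dirichlet integral) are either built into the definition of good measure or standard Hilbert space facts. The only point worth emphasizing is that property (2) is exactly what is needed to prevent the minimizing sequence from concentrating or vanishing, which is why it is part of the definition of good measure.
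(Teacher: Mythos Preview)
Your proof is correct and follows essentially the same direct-method argument as the paper: minimizing sequence, boundedness in $H_0^1(\Omega,d\gamma)$, weak convergence plus compact embedding to pass the normalization and the zero-mean constraint to the limit, and weak lower semicontinuity of the Dirichlet energy. The paper invokes the Banach--Steinhaus theorem for the last step where you cite weak lower semicontinuity of the norm, but these are the same fact here; your justification of why the mean-zero constraint passes to the limit is in fact slightly more explicit than the paper's.
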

\begin{proof}  
Let $\{\phi_m\}_m$ be a minimizing sequence and assume, without loss of generality, that 
\begin{equation}\label{norm1}
\displaystyle\int_{ \Omega } |\phi_m|^2  \, d\gamma=1\,, \qquad \displaystyle\int_{ \Omega } \phi_m  \, d\gamma=0.
\end{equation}
Hence $\{\phi_m\}_m$ is bounded in $H_0^1(\Omega,d\gamma)$, 
and, by the compactness of the embedding $H_0^1(\Omega,d\gamma) \hookrightarrow L^2(\Omega,d\gamma)$,  
 there exists a function $u\in  H^1_0(\Omega, d\gamma)$ such that
$$\phi_m\rightharpoonup u \quad \hbox{weakly in } H^1_0(\Omega, d\gamma),$$ 
$$\phi_m \to u \quad \hbox{strongly in } L^2(\Omega,d\gamma) \> \mbox{and a.e.}.$$
From \eqref{norm1} we immediately deduce that $\int_\Omega u^2\, d\gamma=1$ and $\int_\Omega u \, d\gamma=0$, while as a consequence of the Banach-Steinhaus theorem we obtain
$$
\liminf_{m\to + \infty} \int_\Omega  |\nabla \phi_m|^2 \, d\gamma \geq \int_\Omega  |\nabla u|^2 \, d\gamma,
$$
that is 
$$
\lambda_{1,\gamma}^T(\Omega)=\int_\Omega  |\nabla u|^2 \, d\gamma.
$$
\end{proof} 
\noindent We continue by explicitly observing that $\lambda_{1,\gamma}^T(\Omega)$ is monotone decreasing with respect to the 
inclusion of sets (extending test functions by zero gives the inclusion of the test spaces).

\begin{proposition} Let $d\gamma$ be a good measure and let $m>0$. If $\tilde \Omega$ is an optimal set for the problem 
\begin{equation}\label{vv1}
\min\left\{ \lambda_{1,\gamma}^T(\Omega): \>\Omega \subset \R^n, \Omega \mbox{ open},  |\Omega|_\gamma=m\right\},
\end{equation}
and $\tilde \Omega$ is not connected, then the following properties hold true:
\begin{itemize}
\item[$(i)$] an eigenfunction $\tilde u$ corresponding to $\lambda_{1,\gamma}^T(\tilde\Omega)$ cannot vanish on any connected component of $\tilde\Omega$;
\item[$(ii)$] if $\tilde\Omega$ has two connected components, then $\lambda_{1,\gamma}^T(\tilde\Omega)$  is simple. 
\end{itemize}
\end{proposition}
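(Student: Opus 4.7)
The plan is to treat both parts by contradiction, using the optimality of $\tilde\Omega$, the monotonicity of $\lambda_{1,\gamma}^T$ under set inclusion remarked just before the proposition, and the extension-by-zero trick. For (i) the contradiction will come from unique continuation applied after enlarging one component of the support of $\tilde u$; for (ii) it will come from a dimension count inside the eigenspace, together with (i).

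\textbf{Part (i).} I will assume $\tilde u \equiv 0$ on a connected component $\omega_0$ of $\tilde\Omega$ and set $\tilde\Omega' = \tilde\Omega \setminus \overline{\omega_0}$. Because $\tilde u$ is non-trivial, some component $\omega_1$ of $\tilde\Omega'$ must carry $\tilde u \not\equiv 0$. I enlarge $\omega_1$ to a connected open set $\omega_1^* \supsetneq \omega_1$ still disjoint from the remaining components of $\tilde\Omega'$; using that $\gamma > 0$ makes the weighted measure continuous in the shape, I then add, if necessary, a further disjoint ball to form $\Omega^* \supset \tilde\Omega'$ with $|\Omega^*|_\gamma = |\tilde\Omega|_\gamma$. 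Extending $\tilde u$ by zero on $\Omega^* \setminus \tilde\Omega'$ yields an admissible test function for $\lambda_{1,\gamma}^T(\Omega^*)$ whose Rayleigh quotient equals $\lambda := \lambda_{1,\gamma}^T(\tilde\Omega)$; optimality of $\tilde\Omega$ forces $\lambda_{1,\gamma}^T(\Omega^*) = \lambda$, so the extension is itself a minimizer on $\Omega^*$ and hence a solution of the twisted equation \eqref{prob} there. Since $\tilde u$ vanishes on the non-empty open set $\omega_1^* \setminus \overline{\omega_1}$, plugging this into \eqref{prob} forces the nonlocal constant to vanish, so $\tilde u$ in fact satisfies the weighted Dirichlet equation $-\diver(\gamma \nabla \tilde u) = \lambda \gamma \tilde u$ on $\Omega^*$. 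Unique continuation on the connected component $\omega_1^*$ (where $\tilde u$ vanishes on a non-empty open subset) then yields $\tilde u \equiv 0$ on $\omega_1^*$, contradicting the choice of $\omega_1$.

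\textbf{Part (ii).} Let $\tilde\Omega = \omega_1 \cup \omega_2$ and suppose, for contradiction, the eigenspace $E_\lambda$ associated with $\lambda := \lambda_{1,\gamma}^T(\tilde\Omega)$ has dimension at least two. The map $E_\lambda \to \R$, $u \mapsto \int_{\omega_1} u \, d\gamma$, is linear into a one-dimensional target, so it has a non-zero element $w$ in its kernel; using $\int_{\tilde\Omega} w \, d\gamma = 0$ gives also $\int_{\omega_2} w \, d\gamma = 0$, and by (i) both restrictions $w|_{\omega_i}$ are non-trivial. Since $\omega_1$ and $\omega_2$ are disjoint, each $w|_{\omega_i}$ belongs to $H_0^1(\omega_i, d\gamma)$ and is admissible for $\lambda_{1,\gamma}^T(\omega_i)$; hence its Rayleigh quotient is at least $\lambda_{1,\gamma}^T(\omega_i)$, which in turn is $\ge \lambda$ by monotonicity. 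The Rayleigh quotient of $w$ on $\tilde\Omega$, which equals $\lambda$, is a weighted mean of the two component quotients, so both must equal $\lambda$; in particular $\lambda_{1,\gamma}^T(\omega_1) = \lambda$. Extending a first twisted eigenfunction on $\omega_1$ by zero to $\omega_2$ then produces a minimizer for $\lambda_{1,\gamma}^T(\tilde\Omega)$ that vanishes on the whole component $\omega_2$, which contradicts (i).

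\textbf{Main obstacle.} The delicate point is the unique continuation step in (i): I have to arrange the enlargement so that $\omega_1^* \setminus \overline{\omega_1}$ lies strictly inside the connected component $\omega_1^*$ of $\Omega^*$ (so the vanishing set has non-empty interior inside a single component), and I need unique continuation for the weighted elliptic operator $-\diver(\gamma \nabla \cdot)$, which is standard for the smooth, strictly positive densities underlying the three good measures presented in Section 3. Once (i) is in place, (ii) is a routine averaging argument in the Rayleigh quotient.
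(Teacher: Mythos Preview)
Your argument is correct, but both parts take a substantially different route from the paper.

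For (i), the paper gives a one-line scaling argument: if $\tilde u\equiv 0$ on a component $\tilde\Omega_1$, then $\tilde u$ is already an admissible competitor on $\tilde\Omega_2=\tilde\Omega\setminus\tilde\Omega_1$, so $\lambda_{1,\gamma}^T(\tilde\Omega_2)=\lambda_{1,\gamma}^T(\tilde\Omega)$; dilating $\tilde\Omega_2$ by some $t>1$ so that $|t\,\tilde\Omega_2|_\gamma=|\tilde\Omega|_\gamma$ strictly lowers the eigenvalue, contradicting optimality. Your route instead enlarges a component, extends by zero, identifies the nonlocal constant as zero, and appeals to unique continuation for the operator $-\diver(\gamma\nabla\cdot)$. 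Your argument is heavier but arguably more robust: the dilation step in the paper relies on $t\mapsto |t\,\tilde\Omega_2|_\gamma$ increasing past $|\tilde\Omega|_\gamma$ and on strict monotonicity of $\lambda_{1,\gamma}^T$ under dilation, which is immediate for homogeneous weights (Lebesgue, $x_n^k$) but not transparent for the Gaussian case; your argument avoids this, at the price of needing unique continuation (which, as you note, is available for the smooth positive densities of Section~3).

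For (ii), the paper invokes the Courant-type nodal result (Proposition~\ref{prop:nodalset}): any first twisted eigenfunction has exactly two nodal domains, and by (i) these must coincide with the two components, so every eigenfunction has a fixed sign on each $\omega_i$, and two such functions can never be $L^2$-orthogonal. Your argument is different and self-contained: you produce, via the linear map $u\mapsto\int_{\omega_1}u\,d\gamma$, a nonzero eigenfunction with zero weighted mean on \emph{each} component, use the Rayleigh quotient averaging together with monotonicity to force $\lambda_{1,\gamma}^T(\omega_1)=\lambda$, and then extend a $\omega_1$-eigenfunction by zero to contradict (i). This avoids the nodal-domain theorem entirely; the paper's version is shorter but leans on that additional structural result.
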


\begin{proof} $(i)$ Assume by contradiction that $\tilde\Omega=\tilde\Omega_1\cup\tilde\Omega_2$, with $\tilde\Omega_1, \tilde\Omega_2\ne \emptyset$, $\tilde\Omega_1\cap \tilde\Omega_2=\emptyset$, and $\tilde u=0$ on $\tilde\Omega_1$. Let $t>1$ so that $|t \,\tilde\Omega_2|_\gamma=|\tilde\Omega|_\gamma$. Hence
$$
\lambda_{1,\gamma}^T(t\,\tilde\Omega_2)
<\lambda_{1,\gamma}^T(\tilde\Omega_2)=\lambda_{1,\gamma}^T(\tilde\Omega),
$$
which is absurd, being $\tilde\Omega$ an optimal set for problem \eqref{vv1}.

$(ii)$ The claim is a consequence of $(i)$. Indeed, the two connected components of $\tilde \Omega$ coincide with the nodal domains of any eigenfunction. Therefore two non zero eigenfunctions cannot be orthogonal. 
\end{proof}

\begin{remark}
We stress that, in general, $\lambda_{1,\gamma}^T(\Omega)$ can be multiple. For example, if $\Omega$ is the unitary disk in $\R^2$ and $d\gamma=dx$, $\lambda_{1,\gamma}^T(\Omega)$ is double (see \cite{BB}). This is a clue to balls not being optimal sets.
\end{remark}

The following result concerns  the boundary behavior of an eigenfunction corresponding to  the first weighted twisted eigenvalue of an optimal set $\tilde\Omega$ for problem \eqref{vv1}. Its proof is based on the fact that  $\lambda_{1,\gamma}^T(\tilde \Omega)$ is  simple and on standard  domain derivative arguments (see, for example, \cite{HP}).
\begin{proposition}\label{lemma2}
Assume that the optimal set  $\tilde\Omega$ for problem \eqref{vv1} is smooth. Denoted by $\tilde u$ an eigenfunction corresponding to $\lambda_{1,\gamma}^T(\tilde\Omega)$, there exists a positive constant $\mu$ such that
 $$
 |\nabla \tilde u|^2=\mu\mbox{  on } \partial \tilde\Omega.
 $$
 \end{proposition}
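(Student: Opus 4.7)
The plan is to run a standard Hadamard-type shape calculus adapted to the weighted nonlocal setting. Take a smooth compactly supported vector field $V$ on $\R^n$ and consider the deformation $\Omega_t=(I+tV)(\tilde\Omega)$. I restrict attention to weighted-volume-preserving $V$, i.e. those satisfying
\[
\int_{\partial\tilde\Omega}\gamma(V\cdot\nu)\,d\mathcal{H}^{n-1}=0.
\]
Using the simplicity of $\lambda_{1,\gamma}^T(\tilde\Omega)$ alluded to in the statement (and coming from Proposition 4.2(ii) in the setting in which this proposition is applied), the implicit function theorem yields a $C^1$ branch $t\mapsto(\lambda(t),u_t)$ of eigenpairs of \eqref{prob} on $\Omega_t$ with $u_0=\tilde u$, normalized by $\int_{\Omega_t}u_t^2\,d\gamma=1$ and $\int_{\Omega_t}u_t\,d\gamma=0$. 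Minimality of $\tilde\Omega$ then forces $\lambda'(0)=0$ for every admissible $V$.

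The main step is then to compute $\lambda'(0)$. Since the normalization makes $\lambda(t)=\int_{\Omega_t}|\nabla u_t|^2\,d\gamma$, Hadamard's transport formula gives
\[
\lambda'(0)=\int_{\partial\tilde\Omega}|\nabla\tilde u|^2\gamma(V\cdot\nu)\,d\mathcal{H}^{n-1}+2\int_{\tilde\Omega}\nabla\tilde u\cdot\nabla u'\,d\gamma,
\]
where $u'$ denotes the Eulerian shape derivative. I would integrate the bulk integral by parts and substitute the equation $-\diver(\gamma\nabla\tilde u)=\lambda\gamma\tilde u-c\gamma$, with $c$ the nonlocal constant from \eqref{prob}. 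Differentiating the constraints $\int u_t^2\,d\gamma=1$ and $\int u_t\,d\gamma=0$ and using that $\tilde u=0$ on $\partial\tilde\Omega$ yields $\int_{\tilde\Omega}\tilde u\,u'\,d\gamma=0$ and $\int_{\tilde\Omega}u'\,d\gamma=0$, so the $\lambda$ and $c$ terms both drop out. The surviving contribution is the boundary term $2\int_{\partial\tilde\Omega}u'\,\gamma\,(\partial\tilde u/\partial\nu)\,d\mathcal{H}^{n-1}$; differentiating $u_t|_{\partial\Omega_t}=0$ along the flow gives $u'=-(V\cdot\nu)(\partial\tilde u/\partial\nu)$ on $\partial\tilde\Omega$, and since $\tilde u$ has no tangential gradient there $|\nabla\tilde u|^2=(\partial\tilde u/\partial\nu)^2$ on $\partial\tilde\Omega$. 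Collecting everything gives
\[
\lambda'(0)=-\int_{\partial\tilde\Omega}|\nabla\tilde u|^2\gamma(V\cdot\nu)\,d\mathcal{H}^{n-1}.
\]

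Setting $\psi:=\gamma(V\cdot\nu)$, as $V$ ranges over admissible fields $\psi$ ranges over all smooth functions on $\partial\tilde\Omega$ with $\int_{\partial\tilde\Omega}\psi\,d\mathcal{H}^{n-1}=0$; so $\lambda'(0)=0$ for all such $V$ forces $|\nabla\tilde u|^2$ to be constant on $\partial\tilde\Omega$, call it $\mu$. To see $\mu>0$, observe that $\mu=0$ would mean that both $\tilde u$ and $\partial\tilde u/\partial\nu$ vanish on $\partial\tilde\Omega$, and unique continuation for the linear elliptic equation $-\diver(\gamma\nabla\tilde u)=\lambda\gamma\tilde u-c\gamma$ (the nonlocal term contributes only the constant $c$) would force $\tilde u\equiv0$, contradicting that $\tilde u$ is an eigenfunction. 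The main obstacle I expect is the rigorous $C^1$-differentiability of the branch $t\mapsto(\lambda(t),u_t)$ under the two linear constraints $\int u_t^2\,d\gamma=1$ and $\int u_t\,d\gamma=0$: this requires applying the implicit function theorem on the codimension-one subspace of zero-$d\gamma$-mean functions in $H^1_0(\Omega_t,d\gamma)$, and it is precisely the simplicity of $\lambda_{1,\gamma}^T(\tilde\Omega)$ that makes this possible, after which the rest of the argument is essentially the Dirichlet-Laplacian shape calculus of \cite{HP} modified by the weight $\gamma$ and the inert constant $c$.
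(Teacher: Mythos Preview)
Your argument is correct and follows essentially the same Hadamard shape-derivative approach as the paper: the paper differentiates the PDE \eqref{prob} rather than the energy (a variant it explicitly notes as equivalent in the Remark following the proof) and handles the volume constraint via a Lagrange multiplier rather than by restricting to volume-preserving fields, but the resulting formula $(\lambda_{1,\gamma}^T)'=-\int_{\partial\Omega}(\partial u/\partial\nu)^2\,(V\cdot\nu)\,\gamma\,d\mathcal H^{n-1}$ and the conclusion are identical. Your unique-continuation argument for $\mu>0$ is a useful addition that the paper does not spell out.
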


 \begin{proof}
Let us write the formula for the derivative of the weighted twisted eigenvalue. We will obtain it through a formal computation. For its rigorous proof, that would involve an implicit function Theorem together with Fredholm alternative, we remind the interested reader to \cite[pg. 257--262]{HP}. 

We assume we are given an open set $\Omega$ and a family of applications $\Phi(t)$ satisfying
$$
\Phi: t \in [0,T) \to W^{1,\infty}(\rn) \> \mbox{differentiable at } 0 \>\mbox{with } \Phi(0)=I, \Phi'(0)=V,
$$
where $V$ is a smooth vector field from $\R^n$ to $\R^n$. We denote by $u_t\in H_0^1(\Omega_t,d\gamma)$ the eigenfunction corresponding to $\lambda_{1,\gamma}^T(\Omega_t)$ normalized so that
\begin{equation}\label{norma1}
\int_{\Omega_t} u_t \, d\gamma=0 \quad \mbox{and} \quad \int_{\Omega_t} u_t^2 \, d\gamma=1.
\end{equation}
Let us denote by $u'$ and $(\lambda_{1,\gamma}^T)'$ the derivatives at 0 of $t \to u_t$ and $ t \to \lambda_{1,\gamma}^T(\Omega_t)$, respectively. The differentiation of \eqref{prob} and \eqref{norma1} leads to
\begin{equation}\label{der}
\left\{
\begin{array}{ll}
-\diver\left(\gamma(x) \nabla u'\right)=(\lambda_{1,\gamma}^T)'\gamma(x) u+\lambda_{1,\gamma}^Tu'-c' \quad & \mbox{in } \Omega
\\ \\
u'=-\dfrac{\partial u}{\partial {\bf n}}\, V\cdot{\bf n} & \mbox{on } \partial \Omega, \\ \\  \dint_\Omega u'\, d\gamma=0, \quad \dint_{\Omega} u\, u' d\gamma=0,
\end{array}
\right.
\end{equation}
where $\bf{n}$ stands for the outer unit normal to $\partial \Omega$. If we multiply the equation in \eqref{der} by $u$ and integrate by parts, we get
\begin{equation}\label{derivative}
(\lambda_{1,\gamma}^T)'(\Omega)=-\int_{\partial\Omega} \left(\frac{\partial u}{\partial {\bf n}}\right)^2 V\cdot {\bf n}\, \gamma(x)\, d\mathcal{H}^{n-1}.
\end{equation}
Since $\tilde\Omega$ is a smooth optimal set,  it is a critical point for the function $ t \to \lambda_{1,\gamma}^T(\Omega_t)$, under the volume constraint. Therefore
$$
\int_{\partial\Omega} \left[\left (\frac{\partial u}{\partial {\bf n}}\right)^2 -\mu\right]\,V\cdot {\bf n}\, \gamma(x)\, d\mathcal{H}^{n-1}=0,
$$ 
for any smooth vector field $V$, being $\mu$ the Lagrange multiplier.
This immediately infers the claim.
 \end{proof}
 
 \begin{remark}
 Note that we could directly differentiate $\lambda_{1,\gamma}^T(\Omega_t)=\int_{\Omega_t}|\nabla u_t|^2\, d\gamma$, with the constraint $\int_{\Omega_t}u_t\, d\gamma=0$ and  the normalization $ \int_{\Omega_t} u_t^2 \, d\gamma=1$.
\end{remark}

Now let us consider the first eigenvalue $\lambda_{1,\gamma}^D(\Omega)$ of the following weighted Dirichlet problem
\begin{equation}\label{probD}
\left\{\begin{array}{ll}
-\diver\left(\gamma(x)\nabla u\right)= \lambda \gamma(x) \,u \quad &\mbox{in } \Omega
\\ 
u=0 & \mbox{on } \partial\Omega,
\end{array}
\right.
\end{equation}
that is
\begin{equation*}\displaystyle
\lambda_{1,\gamma}^D(\Omega)=\min
 \left \{
 \frac{\displaystyle\int_\Omega|\nabla \psi|^2\, d\gamma}{\displaystyle\int_\Omega \psi^2\, d\gamma}\,\, : \,\,\, 
 \psi\in H^1_0(\Omega, \gamma)\, , \, \, \psi\ne 0
 \right \}.
\end{equation*}
The following results can be easly proved by adapting the proofs in \cite{FH} (see also \cite{H}).
\begin{proposition}
	\label{prop:botheigenv}
A positive number $\lambda$ is an eigenvalue for both the weighted twisted and the weighted Dirichlet problems, \eqref{prob} and \eqref{probD} respectively, if and only if there exists an associated eigenfunction $u$ for the Dirichlet problem such that
$$
 \int_\Omega u\, d\gamma=0.
$$	
\end{proposition}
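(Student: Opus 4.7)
The statement is an iff whose unifying observation is that, at fixed $\lambda$, the twisted equation \eqref{prob} differs from the Dirichlet equation \eqref{probD} only by the additive forcing $-c\gamma$, where $c=\frac{1}{|\Omega|_\gamma}\int_\Omega \diver(\gamma\nabla u)\,dx$; the two equations coincide exactly when $c=0$. I would prove each implication by exploiting this rank-one structure.

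For the direction ($\Leftarrow$), assume $v$ is a Dirichlet eigenfunction for $\lambda$ satisfying $\int_\Omega v\,d\gamma=0$. Integrating $-\diver(\gamma\nabla v)=\lambda\gamma v$ over $\Omega$ gives $\int_\Omega\diver(\gamma\nabla v)\,dx=-\lambda\int_\Omega v\,d\gamma=0$, so the nonlocal corrector in \eqref{prob} vanishes on $v$. Hence $v$ satisfies \eqref{prob}, and since $v\in H_0^1(\Omega,d\gamma)\setminus\{0\}$ has weighted mean zero, it is an admissible eigenfunction for the twisted problem, so $\lambda$ is a twisted eigenvalue.

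For the direction ($\Rightarrow$), let $u$ be a twisted eigenfunction for $\lambda$ and set $Lu:=-\diver(\gamma\nabla u)-\lambda\gamma u$, so that \eqref{prob} reads $Lu=-c\gamma$. Integrating this identity over $\Omega$ gives $\lambda\int_\Omega u\,d\gamma=0$, hence $\int_\Omega u\,d\gamma=0$ since $\lambda>0$. If $c=0$, then $u$ itself is a Dirichlet eigenfunction with zero weighted mean and we are done. If $c\neq 0$, I would apply the Fredholm alternative: the symmetric bilinear form $b(\phi,\psi):=\int_\Omega\gamma\nabla\phi\cdot\nabla\psi\,dx-\lambda\int_\Omega\phi\psi\,d\gamma$ on $H_0^1(\Omega,d\gamma)$ defines, via the compact embedding $H_0^1(\Omega,d\gamma)\hookrightarrow L^2(\Omega,d\gamma)$ from condition (2) of Definition \ref{dens}, a Fredholm-of-index-zero operator $L$ whose kernel is exactly the Dirichlet $\lambda$-eigenspace. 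The solvability of $Lu=-c\gamma$ then forces $-c\gamma\in(\ker L)^\perp$ in the $L^2(\Omega,dx)$ pairing, namely $-c\int_\Omega v\,d\gamma=0$ for every Dirichlet eigenfunction $v$ of $\lambda$; since $c\neq 0$, this yields the desired $\int_\Omega v\,d\gamma=0$.

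The principal obstacle is the rigorous application of the Fredholm alternative in the weighted setting: one must verify that $L$ is self-adjoint and Fredholm of index zero in the natural weighted Hilbert framework (the compact embedding in condition (2) of Definition \ref{dens} being the key input) and that the natural $L^2(\Omega,dx)$ pairing correctly produces the weighted mean $\int_\Omega v\,d\gamma$ when tested against $\gamma$. Once these points are in place, both directions reduce to elementary integration by parts.
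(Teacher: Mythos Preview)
Your argument is correct. The paper itself does not supply a proof here; it only says the result follows by adapting \cite{FH}, so there is no competing approach to compare against in detail. Your ($\Leftarrow$) direction is exactly the intended one-line computation.

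For ($\Rightarrow$), your Fredholm argument works, but the abstract machinery (and the ``principal obstacle'' you flag) can be bypassed entirely by the same integration by parts you already use elsewhere. Given a twisted eigenfunction $u$ with $Lu=-c\gamma$ and any Dirichlet eigenfunction $w$ for $\lambda$ with $Lw=0$, test the first identity against $w$ and the second against $u$ in $H_0^1(\Omega,d\gamma)$ and subtract; symmetry of the bilinear form yields
\[
0=-c\int_\Omega w\,d\gamma.
\]
Hence either $c=0$, in which case $u$ itself is a Dirichlet eigenfunction with zero weighted mean, or $c\neq 0$ and every Dirichlet eigenfunction $w$ for $\lambda$ already satisfies $\int_\Omega w\,d\gamma=0$. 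This is exactly the orthogonality content of the Fredholm alternative, obtained directly, and it removes any need to set up the weighted Fredholm framework.
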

\noindent We observe that, if $u$ is a weighted Dirichlet eigenfunction for a set $\Omega$, then it is also an eigenfunction for any of the nodal domains that it divides $\Omega$ into, with the same eigenvalue. For the weighted twisted problem there is, of course, no analogue of this result. It is, however, possible to relate the first weighted twisted eigenvalue to the first weighted Dirichlet eigenvalue of its nodal domains.
\begin{proposition}
	\label{prop:nodalset}
Let  $\lambda_{1,\gamma}^T(\Omega)$ be  the first eigenvalue of the weighted twisted problem \eqref{prob} and denote by $u$ an associated eigenfunction. Then  $u$ has precisely two nodal domains.
 Moreover, denoted by 
\begin{equation}\label{omega+}
\Omega_{+}=\{ x\in \Omega \, :\, u(x)>0\},
\qquad
\Omega_{-}=\{ x\in \Omega\, :\, u(x)<0\}
\end{equation} 
its nodal domains, we have
\begin{equation*}\label{ineqDgamma}
\min\{\lambda_{1,\gamma}^D(\Omega_{-}), \lambda_{1,\gamma}^D(\Omega_{+}) \}
\le 
\lambda_{1,\gamma}^T(\Omega)
\le
\max\{\lambda_{1,\gamma}^D(\Omega_{-}), \lambda_{1,\gamma}^D(\Omega_{+}) \}.
\end{equation*}
\end{proposition}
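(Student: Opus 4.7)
The plan is to establish the two assertions in sequence, following the strategy of \cite{FH}.

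For the nodal-count assertion, since $\int_\Omega u\,d\gamma=0$ and $u\not\equiv 0$, $u$ must change sign and hence has at least two nodal domains. To rule out three or more, I argue by contradiction: suppose $u$ has nodal domains $N_1,\dots,N_k$ with $k\ge 3$, set $a_i=\int_{N_i}u\,d\gamma$, $b_i=\int_{N_i}u^2\,d\gamma$, and let $\phi_i$ be the positive first Dirichlet eigenfunction of $N_i$, with eigenvalue $\lambda_{1,\gamma}^D(N_i)$. Testing the PDE in \eqref{prob} against $u$ on each $N_i$ yields
$$\int_{N_i}|\nabla u|^2\,d\gamma=\lambda_{1,\gamma}^T(\Omega)\,b_i-c\,a_i,\qquad c=\frac{1}{|\Omega|_\gamma}\int_\Omega\diver\bigl(\gamma\nabla u\bigr)\,dx,$$
and hence $\lambda_{1,\gamma}^D(N_i)\le\lambda_{1,\gamma}^T(\Omega)-c\,a_i/b_i$. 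Using $\sum_i a_i=0$ together with a sign analysis of $c$ and the $a_i$ (and handling the degenerate case $c=0$ via unique continuation, since then $u$ is a Dirichlet eigenfunction and each $u|_{N_i}$ extended by zero would otherwise solve the Dirichlet eigenvalue equation on the connected set $\Omega$ while vanishing on a set of positive measure), one isolates enough indices with $\lambda_{1,\gamma}^D(N_i)<\lambda_{1,\gamma}^T(\Omega)$ to construct, from the associated $\phi_i$ extended by zero, an admissible trial function $v$ with $\int_\Omega v\,d\gamma=0$ whose Rayleigh quotient is a convex combination of those strictly smaller Dirichlet eigenvalues, hence strictly below $\lambda_{1,\gamma}^T(\Omega)$. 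This contradicts the minimality in \eqref{mineigen}.

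For the upper bound of the sandwich inequality, let $\phi_\pm$ denote the positive first Dirichlet eigenfunctions on $\Omega_\pm$, with eigenvalues $\lambda_{1,\gamma}^D(\Omega_\pm)$. Choose $t>0$ so that $w:=\phi_+-t\phi_-$, extended by zero to $\Omega$, satisfies $\int_\Omega w\,d\gamma=0$; then $w$ is admissible in \eqref{mineigen}. Since $\phi_+$ and $\phi_-$ have disjoint supports, both $\int_\Omega|\nabla w|^2\,d\gamma$ and $\int_\Omega w^2\,d\gamma$ split additively and
$$\frac{\int_\Omega|\nabla w|^2\,d\gamma}{\int_\Omega w^2\,d\gamma}=\frac{\lambda_{1,\gamma}^D(\Omega_+)\,\|\phi_+\|_{L^2(d\gamma)}^2+t^2\,\lambda_{1,\gamma}^D(\Omega_-)\,\|\phi_-\|_{L^2(d\gamma)}^2}{\|\phi_+\|_{L^2(d\gamma)}^2+t^2\,\|\phi_-\|_{L^2(d\gamma)}^2}\le\max\bigl\{\lambda_{1,\gamma}^D(\Omega_+),\lambda_{1,\gamma}^D(\Omega_-)\bigr\},$$
yielding $\lambda_{1,\gamma}^T(\Omega)\le\max\{\lambda_{1,\gamma}^D(\Omega_+),\lambda_{1,\gamma}^D(\Omega_-)\}$. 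For the lower bound, since $u=0$ on $\partial\Omega_\pm$, the restriction $u|_{\Omega_\pm}$ (extended by zero) lies in $H_0^1(\Omega_\pm,d\gamma)$, so the variational characterization yields $\lambda_{1,\gamma}^D(\Omega_\pm)\le R_\pm:=\int_{\Omega_\pm}|\nabla u|^2\,d\gamma/\int_{\Omega_\pm}u^2\,d\gamma$. Splitting $\lambda_{1,\gamma}^T(\Omega)=\int_\Omega|\nabla u|^2\,d\gamma/\int_\Omega u^2\,d\gamma$ across $\Omega_+\cup\Omega_-$ expresses it as a convex combination of $R_+$ and $R_-$ with weights $\int_{\Omega_\pm}u^2\,d\gamma$, whence $\min(R_+,R_-)\le\lambda_{1,\gamma}^T(\Omega)$; combining with the Dirichlet upper bounds gives $\min\{\lambda_{1,\gamma}^D(\Omega_+),\lambda_{1,\gamma}^D(\Omega_-)\}\le\lambda_{1,\gamma}^T(\Omega)$.

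The main obstacle is the first part: although a naive dimension count ($k\ge 3$ free parameters against the single linear constraint $\int v\,d\gamma=0$) already hints at the contradiction, turning this into an explicit trial function whose Rayleigh quotient is strictly below $\lambda_{1,\gamma}^T(\Omega)$ requires a careful analysis of the signs of $c$ and of the $a_i$ and the precise use of the Dirichlet eigenfunctions $\phi_i$ on the nodal domains (the raw restrictions $u|_{N_i}$ have Rayleigh quotient fixed by the PDE identity to $\lambda_{1,\gamma}^T(\Omega)-c\,a_i/b_i$ and may fail to beat $\lambda_{1,\gamma}^T(\Omega)$ in unfavorable sign distributions). Once past this point, the sandwich inequality follows routinely from the disjointness of the nodal supports and the variational characterization of the first Dirichlet eigenvalue.
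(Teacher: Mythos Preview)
Your overall strategy matches the paper's: invoke a Courant-type nodal domain theorem for the twisted problem (the paper simply cites \cite{BB} for this), and then derive the sandwich inequality as in \cite{FH}. Your argument for the sandwich inequality is correct and is exactly the standard one.

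The gap is in your attempt to flesh out the Courant step. The ``sign analysis'' you allude to does not close in all configurations. Suppose $c>0$ and there is exactly one nodal domain, say $N_1$, on which $u>0$ (so $a_1>0$ while $a_2,\dots,a_k<0$, which is perfectly compatible with $\sum a_i=0$ and $k\ge 3$). Your bound $\lambda_{1,\gamma}^D(N_i)\le\lambda_{1,\gamma}^T(\Omega)-c\,a_i/b_i$ then yields a strict inequality only for $i=1$; for $i\ge 2$ the right-hand side exceeds $\lambda_{1,\gamma}^T(\Omega)$ and gives nothing. A single ``good'' index is not enough to manufacture a zero-mean combination of positive functions $\phi_i$. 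Nor can you fall back on the restrictions $u|_{N_i}$: writing $p=a_1$, $q_j=-a_j$ and using Cauchy--Schwarz on the constraint $c_1p=\sum_{j\ge 2}c_jq_j$ gives $c_1^2p\le\sum_{j\ge 2}c_j^2q_j$, hence $\sum_i c_i^2a_i\le 0$ for \emph{every} zero-mean combination, so the resulting Rayleigh quotient is always $\ge\lambda_{1,\gamma}^T(\Omega)$. Your $c=0$ case is also not right as stated: $u|_{N_i}$ extended by zero does not solve the Dirichlet eigenvalue equation on $\Omega$ (the normal derivative jumps across $\partial N_i\cap\Omega$), so unique continuation does not apply to it; and nothing in the hypotheses forces $\Omega$ to be connected.

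The paper does not supply these details either---it defers entirely to \cite{BB}, whose proof of the twisted Courant theorem handles the nonlocal constant differently from the improvisation you sketch. If you want a self-contained argument, consult \cite{BB} directly rather than relying on the sign heuristic.
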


\begin{proof}  By a variation of the Courant's nodal domain theorem applied to the twisted problem, any eigenfunction associated with the first eigenvalue has precisely two nodal domains (see \cite{BB}). Then the proof follows as in \cite{FH}.
\end{proof}

\noindent With analogous arguments as in \cite{FH} (see also \cite{H}) we can prove the following 
\begin{corollary}\label{cor:2.3} 
With the same notation used in Proposition \ref{prop:nodalset}, if we denote by $\lambda_{2,\gamma}^D(\Omega)$ the second weighted Dirichlet eigenvalue of $\Omega$, we have
$$\lambda_{1,\gamma}^T(\Omega)= \lambda_{1,\gamma}^D(\Omega_{+})$$ 
if, and only if, 
$$\lambda_{1,\gamma}^D(\Omega_{+})= \lambda_{1,\gamma}^D(\Omega_{-})=\lambda_{2,\gamma}^D(\Omega).$$ 
\end{corollary}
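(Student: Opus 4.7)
The plan is to sharpen the two-sided estimate of Proposition \ref{prop:nodalset} into an equality characterization by combining it with a well-chosen admissible trial function. Write $\lambda:=\lambda_{1,\gamma}^T(\Omega)$, $\alpha:=\lambda_{1,\gamma}^D(\Omega_+)$, $\beta:=\lambda_{1,\gamma}^D(\Omega_-)$, and let $\phi_\pm>0$ denote the first weighted Dirichlet eigenfunction on $\Omega_\pm$, extended by zero to all of $\Omega$ (so that both belong to $H_0^1(\Omega,d\gamma)$). I would pick $s,t>0$ with $s\int_\Omega\phi_+\,d\gamma=t\int_\Omega\phi_-\,d\gamma$, so that $v:=s\phi_+-t\phi_-$ has vanishing weighted mean and is admissible in \eqref{mineigen}. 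Using the eigenfunction equations and the disjointness of supports, its Rayleigh quotient equals
$$\frac{s^2\alpha\,\|\phi_+\|_{L^2(d\gamma)}^2+t^2\beta\,\|\phi_-\|_{L^2(d\gamma)}^2}{s^2\,\|\phi_+\|_{L^2(d\gamma)}^2+t^2\,\|\phi_-\|_{L^2(d\gamma)}^2},$$
which is a \emph{strict} convex combination of $\alpha$ and $\beta$ since $s,t>0$.

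For the ``if'' direction, I would assume $\alpha=\beta=\lambda_{2,\gamma}^D(\Omega)$: the Rayleigh quotient of $v$ collapses to $\alpha$, giving $\lambda\le\alpha$, while Proposition \ref{prop:nodalset} yields $\lambda\ge\min\{\alpha,\beta\}=\alpha$, hence $\lambda=\alpha$. For the ``only if'' direction, suppose $\lambda=\alpha$ and let $u$ be the corresponding twisted eigenfunction with nodal domains $\Omega_\pm$. Since $u|_{\Omega_\pm}\in H_0^1(\Omega_\pm,d\gamma)$, the variational definitions of $\alpha,\beta$ give
$$\lambda\int_\Omega u^2\,d\gamma=\int_{\Omega_+}|\nabla u|^2\,d\gamma+\int_{\Omega_-}|\nabla u|^2\,d\gamma\ge \alpha\int_{\Omega_+}u^2\,d\gamma+\beta\int_{\Omega_-}u^2\,d\gamma,$$
so $\lambda=\alpha$ forces $\alpha\ge\beta$. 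Conversely, testing $v$ in the definition of $\lambda$ bounds $\lambda$ above by the strict convex combination above, which is \emph{strictly} smaller than $\alpha$ whenever $\alpha>\beta$; combined with $\lambda=\alpha$, this forces $\alpha\le\beta$. Hence $\alpha=\beta=\lambda$, and equality throughout the Rayleigh chain forces $u|_{\Omega_\pm}$ to be first Dirichlet eigenfunctions of $\Omega_\pm$, i.e., proportional to $\phi_\pm$.

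To identify the common value with $\lambda_{2,\gamma}^D(\Omega)$, I compare the Dirichlet equation $-\diver(\gamma\nabla u)=\alpha\gamma u$, valid a.e.\ on each $\Omega_\pm$ by the previous step, with the twisted equation \eqref{prob}: the nonlocal constant $\frac{1}{|\Omega|_\gamma}\int_\Omega\diver(\gamma\nabla u)\,dx$ must vanish, so $u$ is actually a weighted Dirichlet eigenfunction of $\Omega$ with eigenvalue $\alpha$. Since $u$ changes sign, $\alpha\ne\lambda_{1,\gamma}^D(\Omega)$ and hence $\alpha\ge\lambda_{2,\gamma}^D(\Omega)$; the reverse inequality follows from the min-max principle applied to $\mathrm{span}\{\phi_1^D,u\}\subset H_0^1(\Omega,d\gamma)$, using $L^2(d\gamma)$-orthogonality of Dirichlet eigenfunctions with distinct eigenvalues to reduce the Rayleigh quotient on this two-plane to a convex combination of $\lambda_{1,\gamma}^D(\Omega)$ and $\alpha$, whose maximum is $\alpha$. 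The main technical obstacle is precisely the promotion of $u$ from twisted to Dirichlet eigenfunction: this requires extracting full equality in both nodal-domain Rayleigh bounds, which the zero-mean admissibility of $v$—forcing a strict convex combination—is exactly tailored to provide.
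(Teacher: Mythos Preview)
Your argument is correct and spells out precisely the kind of reasoning the paper defers to \cite{FH}: combining the two-sided bound of Proposition~\ref{prop:nodalset} with the admissible competitor $v=s\phi_+-t\phi_-$ built from first Dirichlet eigenfunctions on the nodal domains, and then (implicitly via Proposition~\ref{prop:botheigenv}) promoting $u$ to a genuine Dirichlet eigenfunction once the nonlocal constant is forced to vanish. One minor simplification: for the ``if'' direction you do not actually need $v$, since $\alpha=\beta$ together with Proposition~\ref{prop:nodalset} already gives $\lambda=\alpha$ directly.
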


\noindent After all the above considerations, we can easily deduce that
$$
\lambda_{1,\gamma}^D(\Omega)<\lambda_{1,\gamma}^T(\Omega)\le \lambda_{2,\gamma}^D(\Omega).
$$

\begin{remark}
Actually, as in \cite{BB}, it can be shown that Dirichlet and twisted eigenvalues are intertwined, that is
$$
\lambda_{1,\gamma}^D(\Omega)<\lambda_{1,\gamma}^T(\Omega)\le \lambda_{2,\gamma}^D(\Omega)\le \lambda_{2,\gamma}^T(\Omega)\le \lambda_{3,\gamma}^D(\Omega)\le ...
$$
\end{remark}


\section{A general result for good measures}
The main result of this section is given by Theorem \ref{lemma1}, stating that  the first weighted twisted eigenvalue $\lambda_{1,\gamma}^T(\Omega)$ decreases if we pass from a generic open set $\Omega$ to the union of two disjoint isoperimetric sets, having the same weighted measure as the nodal sets of an eigenfunction corresponding to $\lambda_{1,\gamma}^T(\Omega)$. This implies that the optimal set for problem \eqref{vv1} has to be sought among unions of two disjoint isoperimetric sets, having the same measure as the initial set.

In order to prove  Theorem \ref{lemma1} below, we denote by $u$ an eigenfunction corresponding to $\lambda_{1,\gamma}^T(\Omega)$ and we consider the nodal domains defined in \eqref{omega+}.
Thus, the following identity holds true
\begin{equation*}\label{omega+u}
\lambda_{1,\gamma}^T(\Omega)=
\frac
{\displaystyle \int_{\Omega_{+}}|\nabla u|^2\, d\gamma+\int_{\Omega_{-}}|\nabla u|^2\, d\gamma }
{\displaystyle\int_{\Omega_{+}} u^2 d\gamma  + \int_{\Omega_{-}} u^2 d\gamma }.
\end{equation*}

\begin{theorem}\label{lemma1}
Let us denote by $\omega_-$ and $\omega_+$  
two disjoint isoperimetric sets (for the weighted perimeter with respect to the weighted measure) such that $|\omega_+|_\gamma=|\Omega_{+}|_\gamma$ and $|\omega_-|_\gamma= |\Omega_{-}|_\gamma$.
Then 
\begin{equation}\label{lemma}
\lambda_{1,\gamma}^T(\Omega)\ge \lambda_{1,\gamma}^T(\omega_{+}\cup \omega_{-}).
\end{equation}
\end{theorem}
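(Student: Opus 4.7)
The plan is a rearrangement/symmetrization argument: take an eigenfunction $u$ achieving $\lambda_{1,\gamma}^T(\Omega)$, split it along its nodal set into $u_+ = \max(u,0)$ and $u_- = \max(-u,0)$, rearrange each piece into one of the two prescribed isoperimetric sets, and use the resulting function as a test function in the variational characterization \eqref{mineigen} on $\omega_+\cup\omega_-$.

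More concretely, I would proceed as follows. First, since $u$ is continuous (elliptic regularity for the weighted Dirichlet operator), $\Omega_\pm$ are open, and standard truncation shows $u_\pm \in H^1_0(\Omega_\pm, d\gamma)$ with $\int_{\Omega_\pm}|\nabla u_\pm|^2 \,d\gamma = \int_{\Omega_\pm}|\nabla u|^2\,d\gamma$. By condition (4) in the definition of good measure, we can pick disjoint isoperimetric sets $\omega_+,\omega_-$ with $|\omega_\pm|_\gamma=|\Omega_\pm|_\gamma$, and form the weighted rearrangements $(u_+)^\sharp$ on $\omega_+$ and $(u_-)^\sharp$ on $\omega_-$. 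The Pólya--Szegő principle (condition (3)) gives $(u_\pm)^\sharp\in H^1_0(\omega_\pm,d\gamma)$ with
\[
\int_{\omega_\pm}\bigl|\nabla (u_\pm)^\sharp\bigr|^2\,d\gamma \;\le\; \int_{\Omega_\pm}|\nabla u|^2\,d\gamma,
\]
while Cavalieri's identity \eqref{Cavalieri} preserves the $L^2$ norms.

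Now define the test function $v\in H^1_0(\omega_+\cup\omega_-, d\gamma)$ by
\[
v = (u_+)^\sharp \text{ on } \omega_+, \qquad v = -(u_-)^\sharp \text{ on } \omega_-,
\]
extended by zero. To check the zero-mean constraint in \eqref{mineigen}: equi-distribution yields $\int_{\omega_\pm}(u_\pm)^\sharp\,d\gamma = \int_{\Omega_\pm}u_\pm\,d\gamma$, and the mean-zero condition on $u$ reads $\int_{\Omega_+} u_+\,d\gamma = \int_{\Omega_-}u_-\,d\gamma$; combining, $\int v\,d\gamma = 0$. Hence $v$ is admissible.

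Finally, plugging $v$ into \eqref{mineigen},
\[
\lambda_{1,\gamma}^T(\omega_+\cup\omega_-) \;\le\; \frac{\dint_{\omega_+}|\nabla (u_+)^\sharp|^2\,d\gamma + \dint_{\omega_-}|\nabla (u_-)^\sharp|^2\,d\gamma}{\dint_{\omega_+}((u_+)^\sharp)^2\,d\gamma + \dint_{\omega_-}((u_-)^\sharp)^2\,d\gamma} \;\le\; \frac{\dint_{\Omega_+}|\nabla u|^2\,d\gamma + \dint_{\Omega_-}|\nabla u|^2\,d\gamma}{\dint_{\Omega_+}u^2\,d\gamma + \dint_{\Omega_-}u^2\,d\gamma} = \lambda_{1,\gamma}^T(\Omega),
\]
which is the desired \eqref{lemma}. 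The only subtle point is the verification that the zero-mean condition transfers to $v$; this is the decisive place where having \emph{two} disjoint rearrangement targets (condition (4)) is used, since a single-domain symmetrization of $u$ itself would generally destroy the mean-zero condition. Everything else is a direct application of Pólya--Szegő and Cavalieri.
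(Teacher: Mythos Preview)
Your proof is correct and uses the same core idea as the paper: rearrange $u_+$ and $u_-$ separately onto the two disjoint isoperimetric sets $\omega_+,\omega_-$, invoke P\'olya--Szeg\H{o} for the gradient terms and Cavalieri for the $L^2$ and $L^1$ norms, and observe that the zero-mean constraint survives because $\int_{\omega_+}(u_+)^\sharp\,d\gamma-\int_{\omega_-}(u_-)^\sharp\,d\gamma=\int_\Omega u\,d\gamma=0$.

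The only difference is in how the argument is closed. You plug the rearranged function $v$ directly into the variational characterization \eqref{mineigen} on $\omega_+\cup\omega_-$ and conclude in one line. The paper instead introduces an auxiliary infimum $\lambda_\gamma^*$ over pairs $(f,g)\in H_0^1(\omega_+,d\gamma)\times H_0^1(\omega_-,d\gamma)$ with $\int_{\omega_+}f=\int_{\omega_-}g$, shows $\lambda_{1,\gamma}^T(\Omega)\ge\lambda_\gamma^*$ by the same rearrangement step, and then proves $\lambda_\gamma^*\ge\lambda_{1,\gamma}^T(\omega_+\cup\omega_-)$ by deriving the Euler--Lagrange equations for a minimizing pair $(f_+,f_-)$ and recognizing that $w=f_+\chi_{\omega_+}-f_-\chi_{\omega_-}$ solves the twisted eigenvalue problem on $\omega_+\cup\omega_-$. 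Your route is more elementary: the correspondence $(f,g)\leftrightarrow w$ is a bijection between the admissible classes for $\lambda_\gamma^*$ and for $\lambda_{1,\gamma}^T(\omega_+\cup\omega_-)$, so in fact $\lambda_\gamma^*=\lambda_{1,\gamma}^T(\omega_+\cup\omega_-)$ and the Euler--Lagrange detour is not needed for the inequality itself. The paper's approach does, however, yield the extra information that the minimizer of $\lambda_\gamma^*$ is an actual twisted eigenfunction, which may be of independent interest.
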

\begin{proof} 
Let us consider the weighted rearrangements $u_\pm^\sharp$ of $u_\pm$.
By using the P\'{o}lya-Szeg\H{o} principle  \eqref{Polya_Szego_Eh} and equality \eqref{Cavalieri}, it is easy to verify that
\begin{equation}\label{ww1}
\lambda_{1,\gamma}^T(\Omega)\ge
\frac
{\displaystyle\int_{\omega_{+}} |\nabla u^\sharp_{+}|^2\, d\gamma + \int_{\omega_{-}} |\nabla u^\sharp_{-}|^2\, d\gamma 
}
{\displaystyle\int_{\omega_{+}} | u^\sharp_{+}|^2\, d\gamma + \int_{\omega_{-}} | u^\sharp_{-}|^2\, d\gamma 
}
\end{equation}
and
\begin{equation}\label{ww2}
\int_{\omega_{+}} u^\sharp_{+}\, d\gamma - \int_{\omega_{-}}  u^\sharp_{-}\, d\gamma= 
\int_{\Omega_+}  u_+\, d\gamma-\int_{\Omega_-}  u_-\, d\gamma=\int_\Omega u\,d\gamma=0\, .
\end{equation}
From \eqref{ww1} and \eqref{ww2} we immediately deduce  
$$
\lambda_{1,\gamma}^T(\Omega)\ge\lambda_{\gamma}^*,
$$
with
\begin{equation}\label{lstar}
\lambda_{\gamma}^*=
\inf
\left\{
 \frac{\displaystyle\int_{\omega_{+}}|\nabla f|^2\, d\gamma+ \int_{\omega_{+}}|\nabla g|^2\, d\gamma
 }
 {\displaystyle\int_{\omega_{+}} f^2\, d\gamma + \int_{\omega_{+}} g^2\, d\gamma}\,\ :\,\,
 f\in H^1_0(\omega_{+}, d\gamma), \,
g\in H^1_0(\omega_{-}, d\gamma),\, 
 \int_{\omega_{+}} f\, d\gamma= \int_{\omega_{-}} g\, d\gamma
 \right\}\,.
\end{equation} 
We stress that, by using classical methods of the Calculus of Variations, we can  prove that the infimum in the definition of $\lambda_\gamma^\ast$ is attained for a couple of functions that
we denote by  $(f_{+}, f_{-})$. The Euler-Lagrange equation satisfied by $f_\pm$, taking into account the condition $\int_{\omega_{-}} g\, d\gamma- \int_{\omega_{+}} f\, d\gamma= 0$, can be written as follows
\begin{equation}\label{33}
\begin{array}{ll}
\dint_{\omega_{+}} &\nabla f_{+} \cdot \nabla \phi \, d\gamma +
\dint_{\omega_{-}} \nabla f_{-} \cdot \nabla \psi \, d\gamma -
\lambda_\gamma^\ast\left ( 
\dint_{\omega_{+}}  f_{+}   \phi \, d\gamma +
\dint_{\omega_{-}}  f_{-}   \phi \, d\gamma 
   \right)=
\\ \\
&=\mu_0\left ( 
\dint_{\omega_{-}}    \psi \, d\gamma  - \dint_{\omega_{+}}     \phi \, d\gamma 
\right), \quad \phi \in H_0^1(\omega_+,d\gamma), \psi \in H_0^1(\omega_-,d\gamma),
\end{array}
\end{equation}
where $\mu_0$ is a Lagrange multiplier and, for the homogeneity of the problem, we have assumed
$$
\int_{\omega_{+}}  f_{+}^2 \, d\gamma +
\int_{\omega_{-}}  f_{-}^2 \, d\gamma =1\,.
$$
Let us firstly choose $\psi=0$, and then $\phi=0$, in \eqref{33}. We thus find that $f_\pm$ respectively  solve
\begin{equation} 
	\label{eq:Pfp}
	\begin{cases}
	-\diver\Big( \gamma(x) \nabla f_{+} \Big) =
	\lambda_{\gamma}^*\, \gamma(x) \, f_{+}-\mu_0 \, \gamma(x)&\mbox{in } \omega_{+}\\
	f_+=0 &\mbox{on } \partial \omega_{+}\,,
\end{cases}
\end{equation}
and
\begin{equation}
	\label{eq:Pfm}
	\begin{cases}
	-\diver\Big( \gamma(x) \nabla f_{-} \Big) =
	\lambda_{\gamma}^*\,\gamma(x)\,
	f_{-} +\mu_0 \,\gamma(x) &\mbox{in } \omega_{-}\\
	f_-=0 &\mbox{on } \partial \omega_{-}\,.
\end{cases}
\end{equation}
Let us integrate equation \eqref{eq:Pfp} on $\omega_+$ and equation \eqref{eq:Pfm} on $\omega_-$. By subtracting the obtained integral relations, we have
$$
- \int_{\omega_{+}} \diver\left(\gamma(x)\nabla f_+\right)\, dx
+\int_{\omega_{-}} \diver\left(\gamma(x)\nabla f_-\right)\,dx
=-\mu_0 \left(|\omega_+|_\gamma+|\omega_-|_\gamma\right)=-\mu_0|\Omega|_\gamma.
$$
We deduce
$$
\mu_0 = \frac 1 {|\Omega|_\gamma }\int_{\omega_{+}\cup \omega_{-}} \diver\left(\gamma(x) \nabla w\right)\, dx,
$$
where
$$
w= 
\begin{cases}
f_{+} &\mbox{in } \omega_{+}\\
-f_{-} &\mbox{in } \omega_{-}.
\end{cases}
$$
This implies that $w$ is a solution to the following eigenvalue problem
\begin{equation*}
	\begin{cases}
	-\diver\left(\gamma(x)\nabla w\right)=
	\lambda_{\gamma}^*\,\gamma(x)\,
	w-\gamma(x)\left(\dfrac{1}{|\Omega|_\gamma}\displaystyle \int_{\omega_{+}\cup\, \omega_{-}} \diver\left(\gamma(x)\nabla w\right)\,dx \right)
 &\mbox{in } \omega_{+}\cup \omega_{-}\\
	w=0 &\mbox{on }   \partial  \left( \omega_{+}\cup \omega_{-}\right),
\end{cases}
\end{equation*}
or, equivalently, that $\lambda^*_\gamma$ is a weighted twisted eigenvalue of the set $\omega_{+}\cup \omega_{-}$.Therefore
\begin{equation*}\label{lemma}
\lambda_{1,\gamma}^T(\Omega)\ge\lambda^*_\gamma \ge  \lambda_{1,\gamma}^T(\omega_{+}\cup \omega_{-})\,
\end{equation*}
and the claim follows.
\end{proof}

\begin{corollary}\label{existence}
Assume that the isoperimetric sets for the weighted measures are a one-parameter family of domains and that
the weighted twisted eigenvalue is continuous with respect to this parameter.
Then, for every $m>0$, the shape optimization problem \eqref{vv1}
has a solution $\tilde\Omega$.
\end{corollary}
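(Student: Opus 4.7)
The plan is to invoke Theorem \ref{lemma1} to reduce the shape optimization problem \eqref{vv1}, a priori over all admissible open sets of weighted measure $m$, to a one-dimensional minimization on a compact parameter interval, and then to conclude existence via Weierstrass's theorem.

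First I would note that, by Theorem \ref{lemma1}, every admissible $\Omega$ satisfies $\lambda_{1,\gamma}^T(\Omega)\ge \lambda_{1,\gamma}^T(\omega_+\cup\omega_-)$ for some disjoint isoperimetric sets $\omega_\pm$ whose weighted measures sum to $m$; since any such union is itself an admissible competitor for \eqref{vv1}, the infimum in \eqref{vv1} coincides with
\[
\inf_{t\in[0,m]}\Lambda(t),\qquad \Lambda(t):=\lambda_{1,\gamma}^T\bigl(\omega(t)\cup\omega(m-t)\bigr),
\]
where $\omega(s)$ is the isoperimetric set of weighted measure $s$ in the given one-parameter family (with the convention $\omega(0):=\emptyset$, so that $\Omega_0=\omega(m)$ is a single isoperimetric set). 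Property (4) of Definition \ref{dens} lets the two pieces be chosen disjoint for $t\in(0,m)$; $\Lambda(t)$ is invariant under the rigid displacements used in this choice, and the symmetry $\Lambda(t)=\Lambda(m-t)$ restricts attention to the compact interval $[0,m/2]$.

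The continuity hypothesis on the weighted twisted eigenvalue along the one-parameter family then makes $\Lambda$ continuous on $[0,m/2]$, and Weierstrass's theorem will furnish a minimizer $t^\ast\in[0,m/2]$; the set $\tilde\Omega:=\omega(t^\ast)\cup\omega(m-t^\ast)$ (collapsing to the single set $\omega(m)$ when $t^\ast=0$) then realizes the infimum in \eqref{vv1}. The main delicate point will be the continuity of $\Lambda$ at the degenerate endpoint $t=0$, where the topology of $\Omega_t$ jumps from a two-component union to a single set; one handles this either by interpreting the stated continuity hypothesis to include this limit, or by treating $\omega(m)$ as an additional competitor and applying Weierstrass on compact subintervals $[\delta,m/2]$ with $\delta\to 0$ (any minimizing sequence $t_{\delta}^\ast$ then either stays bounded below, giving a two-component minimizer by continuity, or converges to $0$, in which case $\omega(m)$ itself, being a valid competitor with a well-defined twisted eigenvalue matching the limit by standard domain-perturbation estimates, supplies the minimum).
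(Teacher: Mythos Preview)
Your proposal is correct and follows essentially the same approach as the paper: reduce via Theorem~\ref{lemma1} to unions of two disjoint isoperimetric sets, then use the one-parameter continuity hypothesis (together with compactness/boundedness) to obtain a minimizer. The paper's own proof is a two-sentence sketch of exactly this idea; your version is simply more explicit about the parametrization $t\mapsto\Lambda(t)$, the symmetry reducing to $[0,m/2]$, and the degenerate endpoint $t=0$, none of which the paper spells out.
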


\begin{proof}
Theorem \ref{lemma1} shows that, in order to solve the shape optimization problem, it suffices to look at unions of two disjoint isoperimetric sets. Then, the existence of an optimal set $\tilde \Omega$ immediately follows  from the continuity assumption (and boundedness of the eigenvalues).
\end{proof}


\section{Proof of Theorem \ref{teo:main}: the Gaussian measure}

In the present section we assume $\gamma(x)=\pi^{-n/2} e^{-|x|^2}$. As pointed out in Section \ref{sec_dens}, $d\gamma$ is a good measure and isoperimetric sets are half-spaces. 

\noindent Before proving Theorem \ref{teo:main}, we recall a few properties of the Hermite funcitons which will be useful in the sequel (see, for example, \cite[Chapters 4 and 10]{L}).
\medskip

\subsection{Hermite functions}\label{hermite}

Let us consider the differential equation
\begin{equation}\label{1}
y''-2\,t y'+2\,\nu y=0.
\end{equation}
If $\nu=n \in \N_0$, it is satisfied by the Hermite polynomial of degree $n$, i.e. 
$$
H_n(t)=(-1)^n e^{t^2}\frac{d^n(e^{-t^2})}{dt^n}.
$$
For example,
$$
H_0(t)=1, \quad H_1(t)=2t, \quad H_2(t)=4t^2-2, \quad H_3(t)=8t^3-12t,...
$$
Therefore, when $\nu$ is an arbitrary parameter, it is natural to call Hermite functions the solutions to \eqref{1}.  The Hermite functions can be expressed in terms of the confluent hypergeometric function $\Phi(\alpha, \beta; t)$ as follows
$$
y=A\Phi\left(-\frac \nu 2,\frac 1 2; t^2\right)+Bt\Phi\left(\frac{1-\nu}{2},\frac 3 2 ; t^2 \right).
$$
In particular, choosing the constants $A$ and $B$ to be
$$
A=\frac{2^\nu\, \Gamma\left(\frac 1 2 \right)}{\Gamma\left(\frac{1-\nu}{2}\right)}, \quad B=\frac{2^\nu \,\Gamma\left(-\frac 1 2 \right)}{\Gamma\left(-\frac \nu 2\right)},
$$
we arrive at the solution 
$$
y=H_\nu(t)=\frac{2^\nu\, \Gamma\left(\frac 1 2 \right)}{\Gamma\left(\frac{1-\nu}{2}\right)}\Phi\left(-\frac \nu 2,\frac 1 2; t^2\right)+\frac{2^\nu\, \Gamma\left(-\frac 1 2 \right)}{\Gamma\left(-\frac \nu 2\right)}t\Phi\left(\frac{1-\nu}{2},\frac 3 2 ; t^2 \right),
$$
called the Hermite function of degree $\nu$.

If $\nu=n\in\N_0$, it can be easily verified that $H_\nu(t)$ reduces to the Hermite polynomial of degree $n$.

If $\nu \notin \N_0$, the general solution to equation \eqref{1} can be written in terms of Hermite functions. Indeed, since equation \eqref{1} does not change if we replace $t$ by $-t$, both functions $H_\nu(t)$ and $H_\nu(-t)$ verify it. Moreover, their wronskian is equal to
\begin{equation*}\label{wronsk}
W(H_\nu(t), H_\nu(-t))=\frac{2^{\nu+1}\sqrt{\pi}}{\Gamma(-\nu)}e^{t^2},
\end{equation*}
and it never vanishes. Hence, if $\nu \notin \N_0$, the solutions $H_\nu(t)$ and $H_\nu(-t)$ are linearly independent and the general solution to \eqref{1} can be written as follows
\begin{equation}\label{sol}
y(t)=M H_\nu(t)+N H_\nu(-t), \quad M, N \in \R.
\end{equation}
Clearly, the general solution to a non-homogeneous equation of the type
$$
y''-2\,t y'+2\,\nu y=c, \qquad c \in \R,
$$
has the following form
$$
y(t)=M H_\nu(t)+N H_\nu(-t)+\frac{c}{2\nu}, \quad M, N \in \R.
$$

Unfortunately, if $\nu=n\in\N_0$, $W(H_\nu(t), H_\nu(-t))=0$, hence $H_\nu(t)$ and $H_\nu(-t)$ are linearly dependent. In fact,
$$
H_n(-t)=(-1)^n H_n(t).
$$
Therefore, \eqref{sol} is no longer the general solution to \eqref{1}. For arbitrary natural $\nu$, it can be proven that the general solution to \eqref{1} can be written in either of the following equivalent forms:
$$
y(t)=M H_\nu(t)+N e^{t^2} H_{-\nu-1}(it)=P H_\nu(t)+Q e^{t^2} H_{-\nu-1}(-it), \quad M, N, P, Q \in \R.
$$
The behavior of the Hermite functions for large positive $t$ and arbitrary $\nu$  is described by the following identity
\begin{equation}\label{bai}
H_\nu(\pm t)=\pm (2|t|)^\nu\left[\sum_{k=0}^N \frac{(-1)^k(-\nu)_{2k}}{k!}(2t)^{-2k}+O\left(|t|^{-2N-2}\right)\right],
\end{equation}
where $(-\nu)_{2k}=(-\nu)(-\nu-1)\cdots (-\nu-2k+1)$.
Moreover the following recurrance relation holds true
\begin{equation}\label{rec}
H_\nu'(t)=2\nu H_{\nu-1}(t).
\end{equation}
It is also known that $H_\nu(t)$ has no positive zeros if $0<\nu<1$, while it has $m$ positive zeros if $2 m -1<\nu<2m+1$,  $m\ge 1$. In the particular case of Hermite polynomials, that is $\nu =n \in \N$, $H_n(t)$ has $n$ zeros in the interval $\left[-2\sqrt{\frac{n+1}{2}},2\sqrt{\frac{n+1}{2}}\right]$ (see, for example, \cite{Dean}).

In \cite{IV} the following Turan's type inequality is proved to hold true
\begin{equation}\label{turan}
H_\nu(t)^2-H_{\nu-1}(t)H_{\nu+1}(t)>0, \quad t>h_{1,\nu},
\end{equation}
where $h_{1,\nu}$ is the leftmost zero of $H_\nu(t)$.

\medskip
\subsection{Proof of Theorem \ref{teo:main} }
\label{sec:ineq}

By Theorem \ref{lemma1} inequality \eqref{lemma} holds true with $\omega_+, \omega_-$  half-spaces. Therefore  in order to conclude the proof of Theorem \ref{teo:main}, it remains to prove that the union of two symmetric half-spaces, hence having the same Gaussian measure, gives the lowest possible value of $\lambda_{1,\gamma}^T$ in the class
$$
\mathcal{H}=\left\{H_\ell\cup H_r \subset \R^n: \> |H_\ell|_\gamma\le |H_r|_\gamma, \> |H_\ell\cup H_r|_\gamma=|\Omega|_\gamma\right\},
$$
where
$$
H_\ell=\left\{x\in \R^n: \> x_1<-\ell\right\}, \qquad H_r=\left\{x\in \R^n: \> x_1>r\right\}.
$$

Let $\tilde \Omega$ be the optimal set whose existence is guaranteed by Corollary \ref{existence}. Denote
$$
\tilde \Omega=H_L\cup H_R\,,
$$
for some $L,R>0$. We prove that $\tilde \Omega$ is given whenever $R=L$. Assume by contradiction that $R<L$. We claim that there exists an eigenfunction $u$ corresponding to $\lambda_{1,\gamma}^T(\tilde \Omega)$, that does not satisfy Proposition \ref{lemma2}.

\noindent We proceed by dividing the proof into three steps.
\medskip

\noindent \emph{Step 1. The eigenfunction $u$.} Let us consider the following function, defined in $\tilde \Omega$,
\begin{equation}\label{u}
u=\left\{
\begin{array}{ll}
u_L=A\left(H_\nu(-x_1)-H_\nu(L)\right), \quad & x_1<-L
\\ \\
u_R=B\left(H_\nu(R)-H_\nu(x_1)\right), & x_1>R,
\end{array}
\right.
\end{equation}
where $2\nu =\lambda_{1,\gamma}^T(H_L\cup H_R)$. This function $u$ is an eigenfunction of the problem \eqref{prob} with $\Omega=\tilde\Omega$. Indeed,  $u_L$ and $u_R$ are rearranged functions with respect to the Gaussian measure, that is they depend only on  the variable $x_1$, according to Theorem \ref{lemma1}. Moreover, $u_L$ and $u_R$ satisfy the following boundary value problems, respectively
\begin{equation}\label{eqL}
\left\{
\begin{array}{ll}
y''-2\,x_1\,y'+\lambda_{1,\gamma}^T(H_L\cup H_R)y=c \quad \mbox{if } x<-L
\\ \\
y \in H_0^1\left((-\infty,-L);d\gamma_1\right),
\end{array}
\right.
\end{equation}
and 
\begin{equation}\label{eqR}
\left\{
\begin{array}{ll}
y''-2\,x_1\,y'+\lambda_{1,\gamma}^T(H_L\cup H_R)y=c \quad \mbox{if } x>R
\\ \\
y \in H_0^1\left((R,+\infty);d\gamma_1\right),
\end{array}
\right.
\end{equation}
where $d\gamma_1=e^{-x_1^2}/\sqrt{\pi}\,dx_1$ and $c$ is the constant, unknown a priori,  which corresponds to the term $\frac{1}{|\Omega|_\gamma}\int_\Omega \diver\left(\gamma(x)\nabla u\right)\, dx$.
Furthermore, since a variant of Courant's Theorem on nodal domains holds true in this context, $u_L$ is positive in $H_L$ and $u_R$ is negative in $H_R$. 

\medskip

\noindent \emph{Step 2. The constant $c$.} 
In this step we give the explicit expression of the constant $c$, which  
is the same in both problems \eqref{eqL} and \eqref{eqR}, getting that 
\begin{equation}\label{eq1}
A H_\nu(L)+B H_\nu(R)=0.
\end{equation}
Indeed, if we compute $u'_L$ and $u''_L$ by differentiating the expression contained in \eqref{u},  and then we replace them in the equation in \eqref{eqL}, by using  \eqref{1}, we  deduce
\begin{equation}\label{c1}
c=-2\nu A H_\nu(L).
\end{equation}
Analogously, we compute  $u'_R$ and $u''_R$, we substitute them in the equation in  \eqref{eqR} and, by using  \eqref{1}, we gain
\begin{equation}\label{c2}
c=2\nu B H_\nu(R).
\end{equation}
Combining \eqref{c1} and \eqref{c2} yields \eqref{eq1}.
\medskip

\noindent \emph{Step 3. Conclusion}. 
We prove that 
$
u'(R)^2-u'(L)^2>0,
$
which gives  a contradiction with the optimality condition stated in Proposition \ref{lemma2}.

\noindent 
Combining \eqref{u} and \eqref{eq1} yields
\begin{equation}\label{f}
u'(R)^2-u'(L)^2=B^2H_\nu'(R)^2-A^2 H_\nu'(L)^2=A^2 H_\nu(L)^2\left[\left(\frac{H_\nu'(R)}{H_\nu(R)}\right)^2-\left(\frac{H_\nu'(L)}{H_\nu(L)}\right)^2\right].
\end{equation}
We now use the recurrence relation \eqref{rec} in \eqref{f} and we obtain 
$$
u'(R)^2-u'(L)^2=4\nu^2 A^2 H_\nu(L)^2\left[\left(\frac{H_{\nu-1}(R)}{H_\nu(R)}\right)^2-\left(\frac{H_{\nu-1}(L)}{H_\nu(L)}\right)^2\right].
$$
Denote
$$
\psi_\nu(x_1)=\frac{H_{\nu-1}(x_1)}{H_\nu(x_1)}, \qquad x_1>R.
$$
The function $\psi_\nu(x_1)$ is well-defined since, as recalled in Subsection \ref{hermite}, the Hermite function $H_\nu$ has a finite number of zeros and since $u_R$ is negative in the half-space $H_R$, $R$ is greater than the largest zero of $H_\nu$. 

\noindent Moreover $\psi_\nu(x_1)$ is strictly decreasing in $[R,+\infty[$. Indeed, using \eqref{rec} we can compute 
$$
\psi_\nu'(x_1)=\frac{H'_{\nu-1}(x_1)H_\nu(x_1)-H_{\nu-1}(x_1)H_\nu'(x_1)}{H_\nu(x_1)^2}=\frac{2(\nu-1)H_{\nu-2}(x_1)H_\nu(x_1)-2\nu H_{\nu-1}(x_1)^2}{H_\nu(x_1)^2}.
$$
If $\nu-1<0$, we immediately get that $\psi_\nu'(x_1)<0$. If $\nu-1>0$, we can use Turan's inequality \eqref{turan} and we get to the same conclusion.
On the other hand, since by \eqref{bai} $H_\nu(x_1) \sim x_1^\nu $ as $x_1 \to +\infty$, $\psi_\nu(x_1) \to 0$ as $x_1 \to +\infty$, so that $\psi_\nu(x_1)$ is strictly positive. Therefore the following inequality holds true
$$
u'(R)^2-u'(L)^2=4\nu^2\alpha^2 H_\nu(L)^2\left(\psi_\nu(R)^2-\psi_\nu(L)^2\right)>0\,,$$
and we get a contradiction. Hence, the optimal set $\tilde \Omega$ coincides with the union of the two symmetric half-spaces having the same Gaussian measure, and \eqref{mainineq} holds true.


\section{Proof of Theorem \ref{teo:main}: the power-like measure}

In the present section we assume $\gamma(x)=x_n^k$, $k>0$, $x_n>0$. As pointed out in Section \ref{sec_dens}, $d\gamma$ is a good measure and half-balls contained in the upper half-space  $\{x\in \rn\, :\, x_n>0 \}$,  centered at some point $x_0 \in \{x\in \rn\,:\, x_n=0\}$, are isoperimetric sets. 

\noindent Before proving Theorem \ref{teo:main}, we recall a few properties of the Bessel functions which will be useful in the sequel (see, for example, \cite{OBL}, \cite{Wat}).
\medskip

\subsection{Bessel functions}\label{bessel}

Let us consider the differential equation
\begin{equation}\label{2}
r^2y''+ry'+(r^2-\alpha^2)y=0, \quad \alpha \in \R, r \in \R.
\end{equation}
It is well-known that it is solved by  Bessel functions $J_\alpha$, $Y_\alpha$ of order $\alpha$ of the first and second kind.  $J_\alpha, Y_\alpha$ are linearly independent for any value of $\alpha$. Moreover, the following relation holds true (for non integer $\alpha$)
\begin{equation}\label{jy}
Y_\alpha(r)=\frac{J_\alpha(r)\cos(\alpha \pi)-J_{-\alpha}(r)}{\sin(\alpha \pi)},
\end{equation}
and where the right-hand side is replaced by its limiting value whenever $\alpha$ is an integer. Moreover, $J_\alpha$ satisfies the following fundamental recurrence relation
\begin{equation}\label{consrecbess}
rJ_\alpha'(r)-\alpha J_{\alpha}(r)=-rJ_{\alpha+1}(r),  \qquad r \in \R.
\end{equation}
If we denote by $j_{\alpha,h}, j_{\alpha,h}'$ the zeros of $J_\alpha, J_\alpha'$, respectively, then
$$
\alpha\le j'_{\alpha,1}<j_{\alpha,1}<j'_{\alpha,2}<....
$$
and
$$
 j_{\alpha,1}<j_{\alpha+1,1}<j_{\alpha,2}<....
$$
Moreover in \cite{OBL} the following identities can be found
\begin{equation}
\label{jnu}
\begin{aligned}
J_\alpha(r)&=\frac{\left(\frac 1 2 r\right)^\alpha}{\Gamma(\alpha+1)} \prod_{h=1}^\infty \left(1-\frac{r^2}{j_{\alpha,h}^2}\right), \quad \alpha \ge 0
\\
J_\alpha'(r)&=\frac{\left(\frac 1 2 r\right)^{\alpha-1}}{2\Gamma(\alpha)} \prod_{h=1}^\infty \left(1-\frac{r^2}{(j_{\alpha,h}')^2}\right), \quad \alpha > 0.
\end{aligned}
\end{equation}

\medskip

\subsection{Proof of Theorem \ref{teo:main}}
\label{sec:ineq2}

By Theorem \ref{lemma1} inequality \eqref{lemma} holds true with $\omega_+, \omega_-$  half-balls contained in the  upper half-space $\{x\in \rn\, :\, x_n>0   \}$ and  centered on the hyperplane $\{x_n=0\}$. Therefore,  in order to conclude the proof of Theorem \ref{teo:main}, it remains to prove that the union of two disjoint half-balls contained in the  upper half-space $\{x\in \rn\, :\, x_n>0   \}$, centered on the hyperplane $\{x_n=0\}$, sharing the same weighted measure, gives the lowest possible value of $\lambda_{1,\gamma}^T$ in the class
$$
\mathcal{B}=\left\{B_L\cup B_R \subset \R^n: \> |B_L|_\gamma\ge|B_R|_\gamma, \> |B_L\cup B_R|_\gamma=|\Omega|_\gamma\right\},
$$
with
$$
B_L=\left\{x \in \R^n, x_n>0: \> ||x-O_L||<L\right\}, \quad B_R=\left\{x \in \R^n, x_n>0: \> ||x-O_R||<R\right\},
$$
being $||O_L-O_R||>L+R$.
We proceed by dividing the proof into three steps.
\medskip

\noindent \emph{Step 1. The eigenfunctions.} 
Let $u$ be an eigenfunction corresponding to $\lambda_{1,\gamma}^T(B_L\cup B_R)$, which does not vanish on any of the two half-balls. We denote
$$
u=\left\{
\begin{array}{ll}
u_L & \mbox{in } B_L
\\
u_R & \mbox{in } B_R.
\end{array}
\right.
$$
As in the case of the Gaussian measure, without loss of generality, we can assume $u_L>0$ in $B_L$, $u_R<0$ in $B_R$, and $u_L, u_R$ radially symmetric. Then $u_L$ and $u_R$ solve the following boundary value problems respectively
\begin{equation}\label{eqb1}
\left\{
\begin{array}{ll}
ry''+(n+k-1)y'+\lambda^2r y=cr \quad & \mbox{if } r \in (0,L) 
\\ \\
y \in H_0^1(B_L,d\gamma)
\end{array}
\right.
\end{equation}
and
\begin{equation}\label{eqb2}
\left\{
\begin{array}{ll}
ry''+(n+k-1)y'+ \lambda^2ry=cr \quad & \mbox{if } r \in (0,R) 
\\ \\
y \in H_0^1(B_R,d\gamma),
\end{array}
\right.
\end{equation}
where $y(r)=y(|x-x_0|),$ with $r=|x-x_0|$, $c$ is the constant, unknown a priori,  which corresponds to the term $\frac{1}{|\Omega|_\gamma}\int_\Omega \diver\left(\gamma(x)\nabla u\right)\, dx$ and $\lambda^2=\lambda_{1,\gamma}^T(B_L\cup B_R)$. 

\noindent
Taking into account properties of Bessel functions, recalled in Subsection \ref{bessel},  the general solution to the equation which appears in both problems \eqref{eqb1} and \eqref{eqb2} is given by
$$
y(r)=r^\alpha\left(c_1 J_\alpha(\lambda r)+c_2 Y_\alpha (\lambda r)\right)+\frac{c}{\lambda^2}, \quad c_1,c_2\in \R,
$$
where $\alpha=1-\frac{n+k}{2}<0$.  
Moreover, if   $\alpha$ is not an integer, \eqref{jy} allows us to write $y(r)$ as follows
$$
y(r)=r^\alpha\left(c_1 J_\alpha(\lambda r)+c_2 J_{-\alpha} (\lambda r)\right)+\frac{c}{\lambda^2}, \quad c_1,c_2\in \R.
$$
Since the solutions to both problems \eqref{eqb1}, \eqref{eqb2} must have finite energy, that is  
 $y \in H_0^1(B_L,d\gamma)$ and $y \in H_0^1(B_L,d\gamma)$ respectively, having \eqref{jnu} in mind, we must choose $c_1=0$
 (indeed when $r\to 0$ the gradient of $J_\alpha$ times $x_n^k$ behaves like $1/r$ and is not integrable) and we can write
\begin{equation}\label{u1}
u=\left\{
\begin{array}{ll}
u_L=A \left(|x-O_L|^\alpha J_{-\alpha}(\lambda |x-O_L|)-L^\alpha J_{-\alpha}(\lambda L)\right)\,, \quad & \mbox{if } x \in B_L
\\ \\
u_R=B\left(R^\alpha J_{-\alpha}(\lambda R)- |x-O_R|^\alpha J_{-\alpha}(\lambda |x-O_R|)\right)\,, \quad & \mbox{if } x \in B_R.
\end{array}
\right.
\end{equation}
We can argue exactly in the same way when $\alpha$ is an integer, obtaining the same formula \eqref{u1}.
Recalling the graph of the Bessel function $J_{-\alpha}(x)$, since $u_L$, $u_R$ have constant sign in $B_L$, $B_R$, respectively, and they are monotone with respect to $r$, we immediately infer that $\lambda L, \lambda R   \in [0,j_{-\alpha,1}'].$

\medskip

\noindent \emph{Step 2. The constant $c$.}  The constant $c$ in both problems \eqref{eqb1}, \eqref{eqb2} is the same, so that
\begin{equation}\label{eq2}
AL^\alpha J_{-\alpha}(\lambda L)+BR^\alpha J_{-\alpha}(\lambda R)=0.
\end{equation}
Indeed, if we compute $u'_L$ and $u''_L$ by differentiating the explicit expression contained in \eqref{u1},  and we replace them in the equation in \eqref{eqb1}, by using  \eqref{2}, we  deduce
\begin{equation}\label{cb1}
c=-A \lambda^2 L^\alpha J_{-\alpha}(\lambda L)\,.
\end{equation}
Analogously, we compute  $u'_R$ and $u''_R$, we substitute them in the equation in  \eqref{eqb2} and, by using  \eqref{2}, we  obtain
\begin{equation}\label{cb2}
c=B \lambda^2 R^\alpha J_{-\alpha}(\lambda R),
\end{equation}
By combining \eqref{cb1} and \eqref{cb2}, we deduce \eqref{eq2}.
\medskip

\noindent \emph{Step 3. The optimal set $\tilde \Omega$.} In this step we prove that the optimal set $\tilde \Omega$ is given whenever $R=L$. The existence of $\tilde \Omega$ is guaranteed by Corollary \ref{existence}. 

 \noindent Assume by contradiction that $R<L$. 
Combining \eqref{u1} and \eqref{eq2}  yields
\begin{eqnarray*}
u'(L)^2-u'(R)^2
&=&A^2L^{2\alpha -2} \left(\alpha J_{-\alpha}(\lambda L)+L\lambda J_{-\alpha}'(\lambda L)\right)^2-B^2 R^{2\alpha -2}\left(\alpha J_{-\alpha}(\lambda R)+R\lambda J_{-\alpha}'(\lambda R)\right)^2
\\
& =& A^2 L^{2\alpha} J_{-\alpha}(\lambda L)^2 \lambda^2\left[
\left(\frac {\alpha J_{-\alpha}(\lambda L)+\lambda L J_{-\alpha}'(\lambda L)}{\lambda L J_{-\alpha}(\lambda L)}\right)^2
-
\left(\frac {\alpha J_{-\alpha}(\lambda R)+\lambda R J_{-\alpha}'(\lambda R)}{\lambda R J_{-\alpha}(\lambda R)}\right)^2
\right].
\\
\end{eqnarray*}
Let us define
$$
\phi_\alpha(s)=\frac {\alpha J_{-\alpha}(s)+s J_{-\alpha}'(s)}{s J_{-\alpha}(s)}\,,\qquad s\in [0,  j'_{-\alpha,1}]$$
or equivalently, via \eqref{consrecbess}, 
$$
\phi_\alpha(s)=-\frac {J_{-\alpha+1}(s)}{ J_{-\alpha}(s)}\,,\qquad s\in [0,  j'_{-\alpha,1}].
$$
From \cite[Section 4]{landau} we deduce that $\phi_\alpha(s)$ is strictly decreasing in $[0,j'_{-\alpha,1}]$ and negative.
This provides that 
$$
u'(L)^2-u'(R)^2<0,
$$
which gives a contradiction with Proposition \ref{lemma2}. Hence, the optimal set $\tilde \Omega$ is given by the union of two disjoint half-balls, having the same weighted measure, and \eqref{mainineq} follows.

\begin{remark}
We explicitly observe that, by taking $k=0$ in the previous arguments, meaning $d\gamma=dx$, and recalling that isoperimetric sets are balls, we provide here an alternative proof to the result contained in \cite{FH}.
\end{remark}


\vskip 1cm

(Barbara Brandolini), Dipartimento di Matematica e Informatica, Universit\`a degli Studi di Palermo, via Archirafi 34, 90123 Palermo, Italy; email: \texttt{barbara.brandolini@unipa.it}
\smallskip

(Antoine Henrot), Universit\'e de Lorraine, CNRS, Institut Elie Cartan de Lorraine, F-54000 Nancy, France,
email: \texttt{antoine.henrot@univ-lorraine.fr}

\smallskip
(Anna Mercaldo) Dipartimento di Matematica e Applicazioni ``R. Caccioppoli'', Universit\`a degli Studi di Napoli Federico II, Complesso Monte S. Angelo - via Cintia, 80126 Napoli, Italy; email: \texttt{anna.mercaldo@unina.it}

\smallskip
(Maria Rosaria Posteraro) Dipartimento di Matematica e Applicazioni ``R. Caccioppoli'', Universit\`a degli Studi di Napoli Federico II, Complesso Monte S. Angelo - via Cintia, 80126 Napoli, Italy; email: \texttt{posterar@unina.it}

\end{document}